\documentclass[a4paper, reqno, 11pt]{amsart}
\usepackage{amsmath}
\usepackage{amsthm}
\usepackage{amssymb}
\usepackage{enumerate}
\usepackage{enumitem}
\usepackage{pifont}
\usepackage{mathrsfs}
\usepackage{adjustbox}
\usepackage[parfill]{parskip}
\usepackage{theoremref}

\DeclareMathOperator{\FG}{FG}
\DeclareMathOperator{\FC}{FC}

\theoremstyle{plain}
\newtheorem{thm}{Theorem}[section]
\newtheorem{cor}[thm]{Corollary}
\newtheorem{lem}[thm]{Lemma}
\newtheorem{prop}[thm]{Proposition}

\theoremstyle{definition}
\newtheorem{definition}[thm]{Definition}
\newtheorem{eg}[thm]{Example}
\newtheorem{rem}[thm]{Remark}

\newtheorem{op}[thm]{Open Problem}

\title{On separability properties in direct products of semigroups}
\author{Gerard O'Reilly, Martyn Quick, Nik Ru{\v s}kuc}
\thanks{Gerard O'Reilly would like to thank the School of Mathematics and Statistics of the University of St Andrews for supporting him in his graduate research.}
\address{School of Mathematics and Statistics, University of St Andrews, St Andrews, Scotland, UK}
\email{$\{$gao2, mq3, nr1$\}$@st-andrews.ac.uk}

\keywords{Separability, residual properties, subsemigroup separability, residual finiteness, complete separability, semigroup, direct products, congruence, finite index congruence}
\subjclass[2020]{Primary 20M10; Secondary 20E26, 08A30}
\date{}

\begin{document}
	
\begin{abstract}
We investigate four finiteness conditions related to residual finiteness: complete separability, strong subsemigroup separability, weak subsemigroup separability and monogenic subsemigroup separability.
For each of these properties we examine under which conditions the property is preserved under direct products.
We also consider if any of the properties are inherited by the factors in a direct product.
We give necessary and sufficient conditions for finite semigroups to preserve the properties of strong subsemigroup separability and monogenic subsemigroup separability in a direct product.
\end{abstract}

\maketitle

\footnotetext{Data sharing not applicable to this article as no
  datasets were generated or analysed during the current study.}

\section{Introduction, Preliminaries and Summary of Results}

The purpose of this paper is to investigate the relationship between direct products of semigroups and their factors with respect to four separability properties.
We begin by defining separability properties in general and then specialize to the four of interest within this paper.

\begin{definition}
\thlabel{def:sep}
For a semigroup $S$ and a collection $\mathcal{C}$ of subsets of $S$, we say that $S$ has the \emph{separability property with respect to $\mathcal{C}$} if for any $s \in S$ and any $\mathcal{C}$-subset $Y \subseteq S \setminus \{s\}$ there exists a finite semigroup $P$ and homomorphism $\phi: S \to P$ such that $\phi(s) \notin \phi(Y)$.
In this case we say that $s$ can be \emph{separated} from $Y$ and that $\phi$ \emph{separates} $s$ from $Y$.
Equivalently, $S$ has the separability property with respect to $\mathcal{C}$ if for any $s \in S$ and any $\mathcal{C}$-subset $Y \subseteq S \setminus \{s\}$ there exists a finite index congruence $\rho$ on $S$ such that $[s]_{\rho} \neq [y]_{\rho}$ in~$S/\rho$ for all $ y \in Y$. 
In this case we say that $\rho$ \emph{separates} $s$ from $Y$.

For a semigroup $S$ we say that:
\begin{itemize}[nolistsep, noitemsep]
	\item $S$ is \emph{monogenic subsemigroup separable} (MSS) if $S$ has the separability property with respect to the collection of all monogenic subsemigroups;
	\item $S$ is \emph{weakly subsemigroup separable} (WSS) if $S$ has the separability property with respect to the collection of all finitely generated subsemigroups;
	\item$S$ is \emph{strongly subsemigroup separable} (SSS) if $S$ has the separability property with respect to the collection of all subsemigroups;
	\item $S$ is \emph{completely separable} (CS) if $S$ has the separability property with respect to the collection of all subsets.
\end{itemize}
\end{definition}

We note that separability properties are \emph{finiteness conditions.}
That is, if $\mathcal{P}$ is a separability property then any finite semigroup $S$ has $\mathcal{P}$.
This is because the identity map will separate any point $s$ from any subset of $S \setminus \{s\}$.

The best known separability property is \emph{residual finiteness}, which can be viewed as the separability property with respect to the collection of singleton subsets.
This is equivalent to having the separability property with respect to the collection of all finite subsets.
It was shown in \cite[Theorem 2]{Gray2009} that the direct product of two semigroups is residually finite if and only the factors are residually finite.
The fact that residual finiteness of the factors implies the residual finiteness of the direct product is universally true for algebraic structures.
However, it is non-trivial to show that if a direct product of semigroups is residually finite then both factors are residually finite.
Our aim is to determine for which of our four separability properties we can obtain an analogous results.
In the cases where we fail to obtain such a result, we investigate further to determine what can be said about the separability properties of direct products.

By replacing the word ``semigroup" with ``group" and replacing ``subsemigroup" with ``subgroup", we can define analogous separability properties in the class of groups.
The properties have been widely studied in groups, under various different names.
\emph{Monogenic subgroup separable} groups are also known as \emph{cyclic subgroup separable} groups or $\Pi_C$ groups.
In \cite[Theorem 4]{Stebe}, Stebe showed that the direct product of two monogenic subgroup separable groups is itself monogenic subgroup separable.
\emph{Weakly subgroup separable} groups are also known simply as \emph{subgroup separable} groups or as \emph{locally extended residually finite} (LERF) groups.
It is not true that the direct product of two weakly subgroup separable groups is necessarily weakly subgroup separable.
An example is given by Allenby and Gregorac in \cite[``The Example"]{Allenby}.
\emph{Strongly subgroup separable groups} are also known as \emph{extended residually finite} (ERF) groups.
The direct product of two strongly subgroup separable groups is known to be strongly subgroup separable.
In \cite{Allenby} Allenby and Gregorac attribute this result to Mal'cev in \cite{Malcev}.
It was shown in \cite[Lemma 2.4]{me} that a group is \emph{completely separable} if and only if it is finite.  
Hence, trivially, the direct product of two CS groups is itself CS\@.
The results for groups are recorded in Table \ref{Grouptable}.

\begin{table}
\begin{adjustbox}{max width=\textwidth}
\begin{tabular}{c | c c c c}
		\textbf{Group Property $\mathcal{P}$}  & \textbf{CS} & \textbf{SSS} & \textbf{WSS} & \textbf{MSS} \\
		\hline
		$G$, $H$ are $\mathcal{P}$ & \ding{51} & \ding{51} & \ding{55} & \ding{51} \\
		$\implies G \times H$ is $\mathcal{P}$ & \cite[Lem 2.4]{me} & \cite{Malcev} & \cite[``The Example"]{Allenby} & \cite[Thm 4]{Stebe}\\
\end{tabular}
\end{adjustbox}
\vspace{1em}
\caption{Separability properties of direct products of groups}
\label{Grouptable}
\end{table}

For groups $G$ and $H$, both $G$ and $H$ are always isomorphic to subgroups of $G \times H$.
As subgroups inherit all four of these properties, an easy extension of \cite[Proposition 2.3]{me}, if $G \times H$ has one of these properties then so will both $G$ and $H$.
However, the situation for algebras in general, and for semigroups in particular, may not always be so straightforward.
Indeed, in \cite{de Witt} de Witt was able to show that there exist monounary algebras $A$ and $B$ such that $A$ is not residually finite but $A \times B$ is completely separable.
So factors of a direct product of monounary algebras which has separability property $\mathcal{P}$ need not be $\mathcal{P}$ themselves.
We shall observe that within the class of semigroups a similar situation occurs with the properties of weak subsemigroup separability and monogenic subsemigroup separability.

Before we begin, we give a taste of how separability properties can interact with the semigroup direct product by considering the situation where one of the factors is $\mathbb{N}$.

\begin{thm}
\thlabel{Ncharac}
For a semigroup $S$, the following fully characterise the separability properties of $\mathbb{N} \times S$:
\begin{enumerate}[nolistsep, noitemsep]
	\item $\mathbb{N} \times S$ is completely separable if and only if $S$ is completely separable;
	\item $\mathbb{N} \times S$ is weakly subsemigroup separable but not strongly subsemigroup separable if and only if $S$ is residually finite but not completely separable;
	\item $\mathbb{N} \times S$ is not residually finite if and only if $S$ is not residually finite.
\end{enumerate}
\end{thm}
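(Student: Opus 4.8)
The plan is to reduce all three parts to a single lemma and to flag that lemma as the hard step. Throughout I write $\mathbb{N}=\{1,2,\dots\}$ and use the reformulation that a semigroup is CS exactly when every element is \emph{isolable}, i.e. forms a singleton class of some finite index congruence (take $Y=S\setminus\{s\}$ in \thref{def:sep}). I record two features of $\mathbb{N}$: it is residually finite, and for each $k$ the congruence identifying two naturals iff they are equal or both $\ge k$ is a finite index congruence whose class of any $n<k$ is $\{n\}$; write $\phi_k$ for the associated quotient. I also use the elementary hierarchy $\mathrm{CS}\Rightarrow\mathrm{SSS}\Rightarrow\mathrm{WSS}$ (nested collections of subsets) and the fact that all four properties imply residual finiteness. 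Part~(3) is then immediate: by \cite[Thm 2]{Gray2009}, $\mathbb{N}\times S$ is residually finite iff both factors are, and $\mathbb{N}$ always is. The forward implication of~(2) also follows formally once (1) and (3) are known: if $\mathbb{N}\times S$ is WSS it is RF, so $S$ is RF by~(3); if it is not SSS it is not CS, so $S$ is not CS by~(1). Thus only~(1) and the backward implication of~(2) require genuine work.

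For the constructive directions I combine a quotient of $\mathbb{N}$ isolating the relevant level with a quotient of $S$. For the ``if'' of~(1), given $(n,s)$ and $S$ completely separable, I take $\psi\colon S\to R$ finite with $\psi^{-1}\psi(s)=\{s\}$ and form $\phi_{n+1}\times\psi$; its fibre over $(\phi_{n+1}(n),\psi(s))$ is $\{n\}\times\{s\}$, so $(n,s)$ is isolable and $\mathbb{N}\times S$ is CS. For the WSS half of the ``if'' of~(2), suppose $S$ is RF, let $T\le\mathbb{N}\times S$ be finitely generated and $(n,s)\notin T$. The key observation is that, because every generator has \emph{positive} first coordinate, the set $T_n=\{t\in S:(n,t)\in T\}$ is \emph{finite}: an element of $T$ with first coordinate $n$ is a product of at most $n$ generators. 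Since $s\notin T_n$, residual finiteness of $S$ gives $\psi\colon S\to R$ separating $s$ from the finite set $T_n$, and then $\phi_{n+1}\times\psi$ separates $(n,s)$ from all of $T$ (elements of first coordinate $\ne n$ are separated in the first coordinate, the rest in the second). Hence $\mathbb{N}\times S$ is WSS.

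Both remaining implications funnel through one statement, which I would isolate as a lemma: \emph{$s_0$ is isolable in $S$ if and only if $(1,s_0)$ is isolable in $\mathbb{N}\times S$.} Granting it, the ``only if'' of~(1) follows (the product is CS iff every point is isolable; in particular every $(1,s_0)$ is isolable, which the lemma converts to isolability of every $s_0$ in $S$), and the ``not SSS'' half of~(2) follows from the observation that \emph{$(1,s_0)$ is always indecomposable} in $\mathbb{N}\times S$ (a product has first coordinate $\ge 2$), so its complement is a subsemigroup: if $S$ is not CS then some $s_0$ is not isolable, hence by the lemma $(1,s_0)$ is not isolable, i.e. not separable from its complementary subsemigroup, so $\mathbb{N}\times S$ is not SSS. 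The easy direction of the lemma is the construction $\phi_2\times\psi$ of the previous paragraph.

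The hard direction of the lemma — descending isolability of $(1,s_0)$ in $\mathbb{N}\times S$ to isolability of $s_0$ in $S$ — is where I expect the real difficulty, and it is precisely the sort of factor-inheritance that the introduction notes can fail for general algebras, so here it must be proved. Suppose $\Phi\colon\mathbb{N}\times S\to Q$ is a finite quotient with $\Phi^{-1}(q_0)=\{(1,s_0)\}$, where $q_0=\Phi(1,s_0)$. The naive move fails: the function $s\mapsto\Phi(1,s)$ separates $s_0$ from every other element of $S$ and has only finitely many fibres, but it is \emph{not} a homomorphism, because $\{1\}\times S$ is not a subsemigroup of $\mathbb{N}\times S$ — as $\mathbb{N}$ has no identity or idempotent, the level must strictly increase under multiplication, so $\Phi(1,uv)$ bears no relation to $\Phi(1,u)\Phi(1,v)=\Phi(2,uv)$. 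Thus the separating data live at a level incompatible with the multiplication, and one cannot simply read off a congruence on $S$. My plan is to upgrade this finite-fibre separating function to a genuine finite index congruence by exploiting residual finiteness of $S$ (available since $\mathbb{N}\times S$ is CS, hence RF, so $S$ is RF by~(3)): reduce isolating $s_0$ to separating it from each of the finitely many fibres $\{s:\Phi(1,s)=q\}$ with $q\ne q_0$, and then separate $s_0$ from each such fibre by a finite index congruence assembled from $\Phi$ together with residual finiteness of $S$. Turning level-$1$ separation plus residual finiteness into a single finite index congruence on $S$ is the crux of the theorem and the step I expect to demand the most care.
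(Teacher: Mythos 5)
Your reduction has a fatal flaw: the ``hard direction'' of your key lemma, which you defer as the crux, is not merely difficult --- it is false. For \emph{any} semigroup $S$ and any $s_0 \in S$, the element $(1,s_0)$ \emph{is} isolable in $\mathbb{N}\times S$. Indeed, every product of two elements of $\mathbb{N}\times S$ has first coordinate at least $2$, so the complement $C=(\mathbb{N}\times S)\setminus\{(1,s_0)\}$ is not just a subsemigroup but an \emph{ideal}; the Rees congruence modulo $C$ has exactly two classes, namely $\{(1,s_0)\}$ and $C$, and is therefore a finite index congruence isolating $(1,s_0)$. Concretely, take $S=\mathbb{Z}$: no element of an infinite group is isolable (a finite index semigroup congruence on a group is the coset partition of a finite-index normal subgroup, hence has no singleton classes), yet every $(1,z)$ is isolable in $\mathbb{N}\times\mathbb{Z}$. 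Your observation that $(1,s_0)$ is indecomposable, which you deploy to exhibit its complement as a subsemigroup witnessing failure of SSS, in fact cuts the other way: indecomposability makes the complement an ideal, so separation from it is always achieved by the two-element Rees quotient. No finite-quotient information at level $1$ can ever detect a failure of complete separability of $S$, precisely because level-$1$ elements never decompose; so the planned ``upgrade'' of the level-$1$ separating function cannot succeed in any form.

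Consequently the ``only if'' of (1) and the ``not SSS'' half of (2) --- the two statements you funnel through the lemma --- are left without proof, and this route is unrecoverable rather than merely incomplete. The fix is to place the witness at a level where elements \emph{do} decompose, which is what the paper does. For (1), Golubov's criterion (\thref{Golubov}) gives, for non-CS $S$, an element $a$ with infinitely many distinct sets $[a:b]$; for any finite index congruence on $\mathbb{N}\times S$, pigeonhole produces $b,c$ with $[a:b]\neq[a:c]$ and $(b,1)\sim(c,1)$, and multiplying by suitable $(u,1),(v,1)$ (or by $(v,2)$ when $u=1$) yields $(a,3)\sim(ucv,3)$ with $ucv\neq a$: the non-isolable element is $(a,3)$, not $(a,1)$ (\thref{comp sep}). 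For the ``not SSS'' half of (2) the same computation is run against the subsemigroup $U=\langle (S\setminus\{a\})\times\{3\}\rangle$, which excludes $(a,3)$ since $3\notin\{3k \mid k>1\}$ (\thref{strong no}). The rest of your proposal is sound: part (3), the ``if'' of (1), and the formal derivation of the forward half of (2) from (1) and (3) all match the paper, and your direct argument that $\mathbb{N}\times S$ is WSS when $S$ is residually finite (each level set of a finitely generated subsemigroup is finite, then combine a Rees quotient of $\mathbb{N}$ with residual finiteness of $S$) is a correct, self-contained replacement for the paper's citation of an external result.
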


\begin{proof}
(1) As $\mathbb{N}$ is CS (\cite[Corollary 4]{Golubov70}), the result follows from \thref{comp sep}, which states that the direct product of two semigroups is CS if and only if both factors are CS\@.

(2) $(\Rightarrow)$ As WSS semigroups are residually finite (\cite[Proposition 2.1]{me}), it follows that $S$ is residually finite by \cite[Theorem 2]{Gray2009}.
The fact $S$ cannot be CS is a consequence of \thref{comp sep}. 

$(\Leftarrow)$ The fact that $\mathbb{N} \times S$ is WSS follows from \cite[Proposition 5.1]{me}, which states that a residually finite semigroup which has $\mathbb{N}$ as homomorphic image is WSS\@.
The fact that $S$ cannot be SSS follows from \thref{strong no}, which establishes a more general result.

(3) This follows from \cite[Theorem 2]{Gray2009}.
\end{proof}

\begin{rem}
In the statement of \thref{Ncharac}, the semigroup $\mathbb{N}$ can be replaced by any CS semigroup which has $\mathbb{N}$ as a homomorphic image and the result still holds.
Such semigroups include all free semigroups and free commutative semigroups.
\end{rem}

The characterisation given in \thref{Ncharac} highlights the intriguing interaction of semigroup separability properties with the direct product.
On one hand, it tells us that we can take the direct product of $\mathbb{N}$ with a SSS semigroup and the resulting semigroup need not be SSS, even though both factors are (and in fact $\mathbb{N}$ is CS)\@.
On the other hand we can take a semigroup which is not WSS but its direct product with $\mathbb{N}$ is WSS\@.
Consequently there is no guarantee that the direct product preserves separability properties nor that the factors of a direct product inherit separability properties.

By working through the separability properties in turn, we are able to say which of them are preserved under direct products.
We also consider which separability properties are inherited from the direct product by its factors.
These results are recorded in Table \ref{Semigrouptable}.

\begin{table}
	\begin{adjustbox}{max width=\textwidth}
		\begin{tabular}{c | c c c c}
			\textbf{Semigroup Property $\mathcal{P}$} & \textbf{CS} & \textbf{SSS} & \textbf{WSS} & \textbf{MSS} \\
			\hline
			$S$, $T$ are $\mathcal{P}$ & \ding{51} & \ding{55} & \ding{55} & \ding{55} \\
			$\implies S \times T$ is $\mathcal{P}$ & Thm \ref{comp sep} & Ex \ref{strong k-nilpotent} & Ex \ref{left-zero counter} & Ex \ref{monogenic eg} \\
			\hline
			$S \times T$ is $\mathcal{P}$ & \ding{51} & \ding{51} & \ding{55} & \ding{55} \\
			$\implies S, T$ are $\mathcal{P}$& Thm \ref{comp sep} & Thm \ref{strong yes} & Ex \ref{N cross Z} & Ex \ref{N cross Z}
		\end{tabular}
	\end{adjustbox}
\vspace{1em}
\caption{Separability properties of direct products of semigroups}
\label{Semigrouptable}
\end{table}

It turns out that the properties of strong subsemigroup separability, weak subsemigroup separability and monogenic subsemigroup separability are not necessarily preserved in the direct product.
This motivates the following definition.

\begin{definition}
Let $\mathcal{P}$ be one of the following properties: strong subsemigroup separability, weak subsemigroup separability or monogenic subsemigroup separability. 
We say that a semigroup $T$ is \emph{$\mathcal{P}$-preserving} (in direct products) if for every semigroup $S$ which has property $\mathcal{P}$, the direct product $S \times T$ also has property $\mathcal{P}$.	
\end{definition}

We note that if a semigroup $T$ is $\mathcal{P}$-preserving then $T$ must have property $\mathcal{P}$.
This is because the trivial semigroup has property $\mathcal{P}$ and $T$ is isomorphic to the direct product of itself with the trivial semigroup.
In this paper we focus on characterising when finite semigroups are $\mathcal{P}$-preserving.

We now recall some notions of semigroups and direct products that we shall use throughout the paper.
For a congruence $\rho$ on a semigroup $S$, we denote the $\rho$-class of $s \in S$ by $[s]_{\rho}$.	
For an ideal $I \leq S$, we can define a congruence $\rho_I$ on $S$ by:
\[s \, \rho_I \, t \text{ if and only if } s, t \in I \text{ or } s=t.\]
This congruence is known as the \emph{Rees congruence} on $S$ by $I$.
We denote the quotient of this congruence by $S/I$ and the congruence class of $s \in S$ by $[s]_I$.
Note that if $s \in S \setminus I$, then $[s]_I=\{s\}$.

Green's relation $\mathcal{J}$ on a semigroup $S$ is an equivalence relation given by
\[s \, \mathcal{J} \, t \text{ if and only if } S^1sS^1=S^1tS^1,\]
where $S^1$ is the semigroup $S$ with an identity adjoined.
That is, two elements are $\mathcal{J}$-related if and only if they generate the same two-sided ideal.
Green's relation $\mathcal{L}$ on a semigroup $S$ is an equivalence relation given by
\[s \, \mathcal{L} \, t \text{ if and only if } S^1s=S^1t.\]
That is, two elements are $\mathcal{L}$-related if and only if they generate the same left ideal.
Similarly, Green's relation $\mathcal{R}$ on a semigroup $S$ is an equivalence relation given by
\[s \, \mathcal{R} \, t \text{ if and only if } sS^1=tS^1.\]
That is, two elements are $\mathcal{R}$-related if and only if they generate the same right ideal.
Green's relation $\mathcal{H}$ on a semigroup is an equivalence relation on $S$ given by
\[s \, \mathcal{H} \, t \text{ if and only if } s \, \mathcal{L} \, t  \text{ and } s \, \mathcal{R} \, t .\]
An $\mathcal{H}$-class $H$ is a subgroup of $S$ if and only if $H^2=H$.

For semigroups $S$ and $T$, the projection map $\pi_S : S \times T \to S$ is given by $(s, t) \mapsto s$.
Similarly, $\pi_T: S \times T \to T$ is given by $(s, t) \mapsto t$.
Projection maps are homomorphisms.
Throughout $\mathbb{N}$ denotes the set $\{1,2,3, \dots\}$.

This paper is organised by property.
In Section 2 we show that the direct product of two semigroups is CS if and only if both semigroups are CS (\thref{comp sep}).
In Section 3 we show that if the direct product of two semigroups is SSS then both factors must be SSS (\thref{strong yes}).
We also give a characterisation of when a finite semigroup is SSS-preserving (\thref{finite strong}).
In Section 4 we show that weak subsemigroup separability need not be preserved under direct products nor inherited by factors of a direct product.
Whilst a classification of finite WSS-preserving semigroups is not known, we give some classes of finite semigroups that are WSS-preserving.
In Section 5 we show that monogenic subsemigroup separability need not be preserved under direct products nor inherited by factors of a direct product.
In this case we are able to classify when a finite semigroup is MSS-preserving (\thref{monogenic finite}).

\section{Complete Separability}

Complete separability proves to be the best behaved of our separability properties with respect to the direct product.
In order to show this, we will use a characterisation of complete separability given by Golubov in \cite{Golubov70}.
This characterisation depends upon the following definition.

\begin{definition}
	For a semigroup $S$ and $a, b \in S$ define 
	\[[a:b]=\{(u, v) \in S^1 \times S^1 \mid ubv=a\}.\]	
\end{definition}

Using this definition, we have the following characterisation of CS semigroups.

\begin{thm} \thlabel{Golubov} \normalfont{\cite[Theorem 1]{Golubov70}}
	\textit{A semigroup $S$ is completely separable if and only if for each $a \in S$ the set $\{[a:s] \mid s \in S\}$ is finite.}
\end{thm}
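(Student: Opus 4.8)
The plan is to work entirely with the congruence formulation of complete separability recorded in \thref{def:sep}, and to hang both implications on one elementary observation: since $1 \cdot s \cdot 1 = s$, we have $(1,1) \in [a:s]$ if and only if $s = a$. Thus the single ``coordinate'' $(1,1)$ of the set $[a:s]$ already detects whether $s$ equals $a$, which is exactly what is needed to separate $a$ from everything else.

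For the converse (finiteness $\Rightarrow$ CS), I would introduce, for a fixed $a \in S$, the relation
\[
s \mathrel{\sigma_a} s' \iff [a:s] = [a:s'].
\]
The first step is to check that $\sigma_a$ is a congruence; this is a short computation resting on the reindexing identities $(u,v) \in [a:ps] \iff (up,v) \in [a:s]$ and $(u,v) \in [a:sp] \iff (u,pv) \in [a:s]$, valid because $up, pv \in S^1$. By construction the $\sigma_a$-classes are the fibres of $s \mapsto [a:s]$, so the index of $\sigma_a$ equals $\lvert \{[a:s] \mid s \in S\}\rvert$, finite by hypothesis. Finally, given any $Y \subseteq S \setminus \{a\}$, the observation above shows that $a \mathrel{\sigma_a} y$ would force $(1,1) \in [a:y]$ and hence $y = a$, contradicting $y \in Y$; so $\sigma_a$ separates $a$ from $Y$. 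As $a$ and $Y$ were arbitrary, $S$ is CS.

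For the forward direction (CS $\Rightarrow$ finiteness), I would fix $a \in S$ and apply complete separability to $a$ and $Y = S \setminus \{a\}$ to obtain a finite index congruence $\rho$ with $[a]_\rho \neq [y]_\rho$ for every $y \neq a$; equivalently $[a]_\rho = \{a\}$, so $a$ sits alone in its class. The crucial consequence is that for $s \in S$ and $u,v \in S^1$ one has $usv = a$ if and only if $usv \mathrel{\rho} a$: the forward implication is trivial, and the reverse holds because $usv \in S$ lies in the singleton class $[a]_\rho$. Extending $\rho$ to $S^1$ by letting $1$ form its own class, the condition $usv \mathrel{\rho} a$ can be read off in the finite monoid $S^1/\rho$ as $[u]_\rho [s]_\rho [v]_\rho = [a]_\rho$, and this depends on $s$ only through $[s]_\rho$. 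Hence $[s]_\rho = [s']_\rho$ implies $[a:s] = [a:s']$, so $s \mapsto [a:s]$ factors through the finite quotient $S/\rho$ and takes at most $\lvert S/\rho\rvert$ values, giving the required finiteness.

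I expect the forward direction to be the main obstacle, specifically the step of recognising that it suffices to isolate $a$ as a singleton $\rho$-class. Once one sees that, under this isolation, the equation $usv = a$ can be detected purely in the finite quotient, the finiteness of $\{[a:s] \mid s \in S\}$ drops out. The difficulty is more conceptual than technical: complete separability a priori controls only the separation of $a$ from other points, and it is not obvious that this also controls the combinatorially richer data encoded by the context sets $[a:s]$.
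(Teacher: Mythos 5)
Your proposal is correct in both directions, but there is nothing in the paper to compare it against: the paper states \thref{Golubov} as an imported result, quoted from \cite[Theorem 1]{Golubov70}, and uses it as a black box in the proofs of \thref{comp sep} and \thref{strong no}; no proof of it appears in the text. Judged on its own merits, your argument is sound. The relation $\sigma_a$ you define is exactly the syntactic (principal) congruence of the singleton $\{a\}$, the reindexing identities $(u,v) \in [a:ps] \iff (up,v) \in [a:s]$ and $(u,v) \in [a:sp] \iff (u,pv) \in [a:s]$ do make it a congruence whose classes are the fibres of $s \mapsto [a:s]$, and the observation that $(1,1) \in [a:s]$ exactly when $s=a$ correctly yields both the separation step (finiteness $\Rightarrow$ CS) and, combined with the singleton class $[a]_\rho = \{a\}$, the forward direction. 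In effect you have proved the standard statement that a subset is recognized by a finite quotient if and only if its syntactic congruence has finite index, specialised to singletons, which is presumably the substance of Golubov's original proof as well. One small simplification is available in the forward direction: the extension of $\rho$ to $S^1$ is unnecessary. If $[s]_\rho = [s']_\rho$ and $usv = a$ with $u,v \in S^1$, then compatibility of $\rho$ under multiplication by elements of $S$ (using reflexivity in the cases $u=1$ or $v=1$) already gives $usv \mathrel{\rho} us'v$, whence $us'v \in [a]_\rho = \{a\}$ and so $us'v = a$; this gives $[a:s] = [a:s']$ without ever leaving $S$.
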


Using this, we give the following result.

\begin{thm}
	\thlabel{comp sep}
	The direct product of semigroups $S$ and $T$ is completely separable if and only if $S$ and $T$ are completely separable.
\end{thm}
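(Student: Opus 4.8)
The plan is to use Golubov's characterisation (\thref{Golubov}) for the whole argument, treating the two implications separately; the forward implication, that the factors inherit complete separability, is the substantive one. For the backward implication I would argue directly from the definition rather than through \thref{Golubov}. Suppose $S$ and $T$ are both CS. Given a point $(s,t)\in S\times T$ and a subset $Y\subseteq (S\times T)\setminus\{(s,t)\}$, partition $Y$ as $Y'\cup Y''$ where $Y'=\{\,y\in Y : \pi_S(y)\neq s\,\}$ and $Y''=\{\,y\in Y : \pi_S(y)=s\,\}$, noting that every $y\in Y''$ satisfies $\pi_T(y)\neq t$. Using that $S$ is CS, separate $s$ from $\pi_S(Y')\subseteq S\setminus\{s\}$ by some $\phi\colon S\to P$ with $P$ finite; using that $T$ is CS, separate $t$ from $\pi_T(Y'')\subseteq T\setminus\{t\}$ by some $\psi\colon T\to Q$ with $Q$ finite. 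The homomorphism $(x,y)\mapsto(\phi(x),\psi(y))$ into the finite semigroup $P\times Q$ then separates $(s,t)$ from $Y$, distinguishing points of $Y'$ in the first coordinate and points of $Y''$ in the second, so $S\times T$ is CS.

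For the forward implication, suppose $S\times T$ is CS; I will show $S$ is CS, the argument for $T$ being symmetric. Fix $a\in S$; by \thref{Golubov} it suffices to prove that $\{[a:s] : s\in S\}$ is finite. Choose any $t_0\in T$ and apply \thref{Golubov} to the element $(a,t_0^3)$ of the CS semigroup $S\times T$: the family $\{[(a,t_0^3):x] : x\in S\times T\}$ is finite, so in particular the subfamily $\mathcal{F}=\{[(a,t_0^3):(s,t_0)] : s\in S\}$ is finite. The crux is to recover $[a:s]$ from $[(a,t_0^3):(s,t_0)]$. An equation $(u_1,u_2)(s,t_0)(v_1,v_2)=(a,t_0^3)$ in $(S\times T)^1$ decouples coordinatewise into $u_1 s v_1=a$ in $S^1$ and $u_2\,t_0\,v_2=t_0^3$ in $T^1$, so projecting the solution set onto the $S$-coordinates yields exactly those $(u_1,v_1)\in[a:s]$ whose pattern of entries equal to an adjoined identity can be matched by a solution of the $T$-equation. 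The point of the target $t_0^3$ and second coordinate $t_0$ is that $t_0\,t_0\,t_0=t_0^3$, $t_0\,t_0^2=t_0^3$ and $t_0^2\,t_0=t_0^3$ solve $u_2\,t_0\,v_2=t_0^3$ for every combination of genuine and identity entries except possibly $u_2=v_2=1$; hence the projection recovers all of $[a:s]$ apart from, at worst, the single pair $(1,1)$. Since $(1,1)\in[a:s]$ if and only if $s=a$, the value $[a:s]$ is determined by the projected set together with one bit recording whether $s=a$, and as $\mathcal{F}$ is finite only finitely many projected sets occur; thus $\{[a:s] : s\in S\}$ is finite, as required.

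The main obstacle is precisely this identity-adjunction mismatch: the adjoined identity of $(S\times T)^1$ acts as the identity in both coordinates simultaneously, so the solution set of $(u_1,u_2)(s,t_0)(v_1,v_2)=(a,t_0^3)$ is not the naive direct product of the $S$- and $T$-solution sets but a ``coupled'' subset of it, and a careless projection could lose exactly those pairs $(u_1,v_1)$ with an entry equal to the adjoined identity. The deliberate choice of $t_0^3$ and $t_0$ is what defeats this coupling, by guaranteeing the companion $T$-equation is solvable in every relevant pattern. I expect verifying this pattern-matching, together with the harmless exceptional pair $(1,1)$, to be the one step needing genuine care, with the routine monoid-versus-non-monoid bookkeeping around $S^1$, $T^1$ and $(S\times T)^1$ handled alongside it.
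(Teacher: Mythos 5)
Your proof is correct, and the two implications compare differently with the paper. The backward implication is essentially the paper's argument: the paper takes one CS-witness per factor, separating $s$ from all of $S\setminus\{s\}$ and $t$ from all of $T\setminus\{t\}$, and uses the product homomorphism, which subsumes your partition of $Y$ into $Y'$ and $Y''$. The forward implication, however, is a genuinely different route. The paper proves the contrapositive: if $S$ is not CS, fix $a$ with $\{[a:s] \mid s\in S\}$ infinite; for an arbitrary finite index congruence $\sim$ on $S\times T$, finiteness of the index produces $b,c$ with $[a:b]\neq[a:c]$ and $(b,t)\sim(c,t)$, and the computations $(a,t^3)=(u,t)(b,t)(v,t)\sim(ucv,t^3)$ (when $u,v\in S$) and $(a,t^3)=(b,t)(v,t^2)\sim(cv,t^3)$ (when $u=1$) exhibit a concrete element $(a,t^3)$ that no finite index congruence separates from its complement. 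You instead argue directly, applying \thref{Golubov} inside the CS product and transferring finiteness of $\mathcal{F}$ down to $S$ via the projection $[(a,t_0^3):(s,t_0)]\mapsto[a:s]$, well defined up to the exceptional pair $(1,1)$. Notably, both proofs pivot on the identical technical device: target exponent $3$ with multiplier entries $t$ and $t^2$, so that the $T$-coordinate equation $u_2t_0v_2=t_0^3$ is solvable in every pattern of adjoined identities except the all-identity one; in the paper this appears as the two-case product computation, in yours as the lifting analysis. What the paper's contrapositive buys is freedom from any bookkeeping relating $(S\times T)^1$ to $S^1\times T^1$, since it only ever multiplies genuine elements of the product; what your direct version buys is a clean transfer principle (finiteness of Golubov families passes to projections) at the price of the identity-coupling analysis, which you correctly isolate as the crux and resolve correctly --- the lost pair $(1,1)$ can occur only when $s=a$, contributing at most the single extra value $[a:a]$ beyond the finitely many projections of members of $\mathcal{F}$.
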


\begin{proof}
	$(\Leftarrow)$ The fact that the direct product of two CS semigroups is CS was shown by Golubov in \cite[Lemma 2]{Golubov72}.
	Here we present an independent proof.
	Assume that $S$ and $T$ are CS\@.
	Let $(s, t) \in S \times T$.
	There exists a finite semigroup $U$ and homomorphism $\phi: S \to U$ such that $\phi(s) \notin \phi(S \! \setminus \! \{s\})$.
	There exists a finite semigroup $V$ and homomorphism $\psi: T \to V$ such that $\psi(t) \notin \psi(T \! \setminus \! \{t\})$. 
	Then $\phi \times \psi: S \times T \to U \times V$ given by $(\phi \times \psi)(a, b)=(\phi(a), \psi(b))$ is a homomorphism that separates $(s, t)$ from its complement.
	
	$(\Rightarrow)$ We will prove the contrapositive, so assume that $S$ is not CS\@.
	We show that $S \times T$ is not CS\@.
	By \thref{Golubov} there exists some $a \in S$ such that the set $\{[a:s] \mid s \in S \}$ is infinite.
	Let $\sim$ be a finite index congruence on $S \times T$.
	Fix $t \in T$.
	Then there exist $b, c \in S$ such that $[a:b] \neq [a:c]$ and $(b, t) \sim (c, t)$.
	Assume, without loss of generality, that there exist $u, v \in S^1$ such that $ubv=a$ but $ucv \neq a$.
	We split into two cases.
	The first case is that $u, v \in S$.
	Then 
	\[(a, t^3)=(u, t)(b, t)(v, t) \sim (u, t)(c, t)(v, t)=(ucv, t^3).\]
	The second case is that either $u=1$ or $v=1$.
	We will deal with the case that $u=1$.
	A similar argument deals with the case that $v$ is not in $S$.
	Then
	\[(a, t^3)=(b,t)(v, t^2) \sim (c, t)(v, t^2)=(cv, t^3)=(ucv, t^3).\]
	In either case we cannot separate $(a, t^3)$ from its complement. 
	Therefore $S \times T$ is not CS\@.
\end{proof}

\section{Strong Subsemigroup Separability}

Strong subsemigroup separability is not as well behaved as complete separability or residual finiteness with respect to the direct product.
Indeed, the direct product of two SSS semigroups need not be itself SSS\@.
Golubov showed this in \cite{Golubov72} with the direct product of two infinite SSS semigroups.
We show that even when one of the factors is finite, the direct product of two SSS semigroups need not be SSS (\thref{strong k-nilpotent}).
However, it is true that if a direct product of semigroups is SSS then both factors are SSS (\thref{strong yes}).
We shall begin by stating a lemma which provides a sufficient condition for the direct product of two semigroups to fail to be SSS\@.

First, we recall some facts about monogenic semigroups.
A monogenic semigroup $T=\langle x \rangle $ is either isomorphic to $\mathbb{N}$ or it is finite.
In the case that $T$ is finite, there exist positive integers $m$ and $r$ such that $x^m=x^{m+r}$.
The smallest such value of $m$ is called the \emph{index} of $T$ and the smallest such value of $r$ is called the \emph{period} of $T$.
For more on monogenic semigroups see \cite[Theorem 1.2.2]{Howie95}.
To simplify statements, we will say that the infinite monogenic semigroup has index $\infty$.

\begin{lem}
\thlabel{strong no}
	Let $S$ be a semigroup which is not completely separable and let $T$ be a monogenic semigroup with index $m \geq 4$.
	Then $S \times T$ is not strongly subsemigroup separable.
\end{lem}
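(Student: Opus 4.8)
The plan is to prove the contrapositive exactly as the statement is phrased: assuming $S$ is not CS, I will exhibit a single subsemigroup $Y$ of $S \times T$ together with a point $p \notin Y$ that no finite-index congruence separates from $Y$. By \thref{Golubov} the failure of complete separability hands me an element $a \in S$ for which $\{[a:s] \mid s \in S\}$ is infinite, and this infinitude is the resource that forces a collapse in every finite quotient. The idea is to run the collapsing argument from the proof of \thref{comp sep}, but instead of merely hitting the complement of a point, to arrange all the resulting ``witnesses'' to live in one fixed subsemigroup sitting in the $x^3$-level of $S \times T$.

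Concretely, I would set $U = \{(u,v) \in S^1 \times S^1 \mid usv = a \text{ for some } s \in S\}$, the set of contexts in which $a$ occurs, put $W = \{ucv \mid (u,v) \in U,\ c \in S\} \setminus \{a\}$, and take $Y = \langle\, (w, x^3) \mid w \in W \,\rangle$ with target $p = (a, x^3)$. Every element of $W$ differs from $a$ by construction, so $p$ could only enter $Y$ through a product of at least two generators, and here the hypothesis $m \ge 4$ is essential. Since $3 < m$, the power $x^3$ lies in the non-cyclic part of $T$, so $x^{3k} = x^3$ forces $k = 1$; hence the only elements of $Y$ with second coordinate $x^3$ are the generators themselves, and as $a \notin W$ we conclude $p \notin Y$. (For index $3$ one would instead have $x^3$ in the kernel, so longer products could wrap back to level $x^3$ and drag $a$ into $Y$, which is exactly why the bound cannot be relaxed for this construction.)

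It then remains to show that no finite-index congruence $\sim$ separates $p$ from $Y$. Using that $\{[a:s] \mid s \in S\}$ is infinite, I would first fix an infinite family $b_1, b_2, \dots \in S \setminus \{a\}$ with the sets $[a:b_i]$ pairwise distinct. As $\sim$ has only finitely many classes, two of the $(b_i, x)$ must collapse, say $(b_i, x) \sim (b_j, x)$ with $[a:b_i] \neq [a:b_j]$; choosing $(u,v)$ in the symmetric difference (and noting $(u,v) \neq (1,1)$ precisely because $b_i, b_j \neq a$) gives $u b_i v = a$ while $w := u b_j v \neq a$, so $w \in W$. Multiplying the relation $(b_i, x) \sim (b_j, x)$ on the appropriate sides by $(u,x)$ and $(v,x)$ (and replacing these by $(u, x^2)$ or $(v, x^2)$ in the degenerate cases $v = 1$ or $u = 1$) and reading off second coordinates yields $p = (a, x^3) \sim (w, x^3) \in Y$. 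Thus $\sim$ fails to separate $p$ from $Y$, and since $\sim$ was arbitrary, $S \times T$ is not SSS.

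The main obstacle, and the only place real care is needed, is reconciling the two competing demands on $Y$: it must be large enough to contain a witness for \emph{every} finite quotient, yet small enough to miss $(a, x^3)$ itself. The device that resolves this tension is to confine $Y$ to the single level $x^3$ and invoke $m \ge 4$ so that this level is closed off from all longer products; once that is in place, the rest is a routine transcription of the complete-separability collapse argument, with the one extra bookkeeping point being the choice $b_i \neq a$ that rules out the context $(u,v) = (1,1)$.
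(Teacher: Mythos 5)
Your proof is correct and takes essentially the same route as the paper: the paper fixes the subsemigroup $U = \langle (S \setminus \{a\}) \times \{x^3\} \rangle$, uses $m \geq 4$ to conclude $(a, x^3) \notin U$, and runs the same Golubov-plus-pigeonhole collapse, multiplying $(b,x) \sim (c,x)$ by $(u,x)$, $(v,x)$ (or $(v,x^2)$ in the degenerate cases) to land in $U$ --- and in fact your $W$ equals $S \setminus \{a\}$ outright, since $(1,1)$ is always a context for $a$, so your $Y$ is literally the paper's subsemigroup. Your explicit choice of the $b_i$ distinct from $a$, which rules out the witness $(u,v)=(1,1)$, is a point of care that the paper's write-up glosses over, but it is a refinement of, not a departure from, the same argument.
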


\begin{proof}
	Let $x$ be the generator of $T$.
	As $S$ is not CS, by \thref{Golubov} there exists some $a \in S$ such that the set $\{[a:s] \mid s \in S\}$ is infinite. 
	Let $\sim$ be a finite index congruence on $S \times T$.
	Then there exist $b,  c \in S$ such that $[a:b] \neq [a:c]$ and $(b, x) \sim (c, x)$.
	Assume, without loss of generality, that there exist $u,  v \in S^1$ such that $ubv=a$ but $ucv \neq a$.
	Let $U=\langle (S \setminus \{a\}) \times \{x^3\} \rangle \leq S \times T $. 
	Since $m\geq 4$ we have $x^3 \notin \{ x^{3k} | k>1\}$, and hence $(a,x^3)\not\in U$.
	
	We split into two cases.
	The first is that $u, v \in S$.
	Then
	\[(a, x^3)=(u,x)(b, x)(v, x) \sim (u, x)(c, x)(v, x)=(ucv, x^3) \in U.\]
	The second case is when either $u=1$ or $v=1$.
	We will deal with the case that $u=1$.
	Then
	\[(a, x^3)=(b, x)(v, x^2) \sim (c, x)(v, x^2)=(ucv, x^3) \in U.\]
	In either case we cannot separate $(a, x^3)$ from $U$.
	A similar argument deals with the case that $v$ is not in $S$.
	Therefore $S \times T$ is not SSS\@.
\end{proof}

As there exist SSS semigroups which are not CS, for example \cite[Examples 5.4 and 5.11]{me}, we can use \thref{strong no} to construct an example of a direct product of a SSS semigroup with a finite semigroup which itself is not SSS\@.

\begin{eg}
	\thlabel{strong k-nilpotent}
	As an immediate consequence of \thref{strong no}, if we take the direct product of an SSS semigroup $S$ which is not CS with the finite 4-nilpotent semigroup $\langle x \mid x^4=x^5 \rangle$, the resulting semigroup is not SSS\@.
\end{eg}

Despite the negative nature of \thref{strong no}, it actually plays a key part in the proof of the following result.

\begin{thm}
	\thlabel{strong yes}
	If $S \times T$ is strongly subsemigroup separable then both $S$ and $T$ are strongly subsemigroup separable.
\end{thm}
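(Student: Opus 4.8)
The plan is to establish the two conclusions, ``$S$ is SSS'' and ``$T$ is SSS'', by a symmetric argument that splits into cases according to whether the \emph{other} factor contains an idempotent. Throughout I would use two elementary facts. First, strong subsemigroup separability is inherited by subsemigroups: if $U \leq S \times T$ and $\rho$ is a finite index congruence on $S \times T$ separating a point from a subsemigroup $Y \subseteq U$, then the restriction $\rho \cap (U \times U)$ is a congruence on $U$ whose classes are the non-empty intersections of $\rho$-classes with $U$, hence it has finite index and still separates that point from $Y$. Second, complete separability implies strong subsemigroup separability, since the collection of all subsemigroups is contained in the collection of all subsets.

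To show that $S$ is SSS I would argue as follows. If $T$ contains an idempotent $e$, then $S \times \{e\}$ is a subsemigroup of $S \times T$ isomorphic to $S$; since $S \times T$ is SSS and SSS passes to subsemigroups, $S$ is SSS\@. If instead $T$ contains no idempotent, then every element $t \in T$ generates an infinite monogenic subsemigroup $\langle t \rangle \cong \mathbb{N}$ (a finite monogenic semigroup always contains an idempotent), which by our convention has index $\infty \geq 4$. Now $S \times \langle t \rangle$ is a subsemigroup of $S \times T$ and is therefore SSS\@. But by \thref{strong no}, were $S$ not completely separable, $S \times \langle t \rangle$ would fail to be SSS\@. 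Hence $S$ must be completely separable, and in particular SSS\@.

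The argument for $T$ is identical after interchanging the roles of the factors via the isomorphism $S \times T \cong T \times S$: if $S$ has an idempotent $f$ we embed $T$ as $\{f\} \times T$, and otherwise we choose $s \in S$ with $\langle s \rangle \cong \mathbb{N}$ and apply \thref{strong no} to the subsemigroup $\langle s \rangle \times T \cong T \times \langle s \rangle$ to force $T$ to be completely separable, hence SSS\@.

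The one genuinely non-trivial point is the idempotent-free case, where the factor of interest need not embed into the direct product at all, so the subsemigroup-inheritance trick cannot be applied to it directly. The key idea that unlocks this case is to invoke \thref{strong no} contrapositively: an idempotent-free factor automatically supplies an infinite monogenic subsemigroup of index $\geq 4$, and SSS of the whole direct product is then incompatible with a failure of complete separability in the other factor. Everything else --- the restriction of congruences and the implication that complete separability yields SSS --- is routine.
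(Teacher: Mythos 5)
Your proposal is correct and follows essentially the same route as the paper: a case split on whether the other factor contains an idempotent, embedding the factor as a subsemigroup in the idempotent case, and in the idempotent-free case using a copy of $\mathbb{N}$ inside the other factor together with \thref{strong no} (applied contrapositively) and the fact that SSS passes to subsemigroups. The only cosmetic differences are that you prove subsemigroup inheritance directly rather than citing it, and you phrase the idempotent-free case as directly forcing complete separability where the paper runs a proof by contradiction.
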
 

\begin{proof}
	Assume $S \times T$ is SSS\@.
	Without loss of generality we show that $S$ is SSS\@.
	We split into two cases. 
	
	The first case is that $T$ contains an idempotent. 
	Then $S$ is isomorphic to a subsemigroup of the SSS semigroup $S \times T$ and hence $S$ must also be SSS by \cite[Proposition 2.3]{me}.
	
	The second case is that $T$ has no idempotents.
	For a contradiction assume that $S$ is not SSS\@.
	Then $S$ is certainly not CS\@.
	As $T$ is idempotent free, it contains a copy of $\mathbb{N}$. 
	Then $S \times \mathbb{N} \leq S \times T$. 
	But $S \times \mathbb{N}$ is not SSS by \thref{strong no}.
	This contradicts the strong subsemigroup separability of $S \times T$ and therefore it must be the case that $S$ is SSS\@.
\end{proof}

The rest of this section is dedicated to establishing when a finite semigroup is SSS-preserving.
To do this we show that SSS semigroups enjoy a separability property, in the sense of \thref{def:sep}, with respect to a more general class of subsets than just subsemigroups.

\begin{prop}
	\thlabel{V^{n+1}}
	Let $S$ be a strongly subsemigroup separable semigroup and let $V \subseteq S$ such that $V^{n+1} \subseteq V$ for some $n \in \mathbb{N}$ and let $ s \in S \setminus V$.
	Then $s$ can be separated from $V$.	
\end{prop}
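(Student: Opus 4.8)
The plan is to reduce to a genuine subsemigroup separation wherever possible and to mop up the remaining elements by residual finiteness. First I would record the consequences of the hypothesis $V^{n+1}\subseteq V$: multiplying by $V$ repeatedly gives $V^{m}\subseteq V^{m-n}$ for every $m>n$ (for $m>n+1$ write $V^m=V^{m-n-1}V^{n+1}\subseteq V^{m-n-1}V=V^{m-n}$), so that $\langle V\rangle=V\cup V^2\cup\cdots\cup V^n$. In particular $\langle V\rangle$ is a finitely-layered subsemigroup of $S$, and, being a subsemigroup of an SSS semigroup, is itself SSS. I would also note that each ``tail'' $J_k=\bigcup_{j\ge k}V^j$ is a subsemigroup of $S$ (indeed an ideal of $\langle V\rangle$) and that $J_1\supseteq J_2\supseteq\cdots$.

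The argument then splits according to whether $s$ lies in $\langle V\rangle$. If $s\notin\langle V\rangle$, then $\langle V\rangle$ is a subsemigroup of $S$ containing $V$ but not $s$, and the SSS property of $S$ separates $s$ from $\langle V\rangle$, hence from $V$. So the substance is the case $s\in\langle V\rangle\setminus V$, in which no single subsemigroup can both contain $V$ and avoid $s$, since any such subsemigroup contains $\langle V\rangle\ni s$.

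For this case the strategy is to cover $V$, up to a finite set, by subsemigroups of $S$ that avoid $s$: concretely, to produce a subsemigroup $W\le S$ (or finitely many such) with $s\notin W$ and $V\setminus W$ finite. Granting this, the separation follows by a routine patching. By SSS choose $\phi_0\colon S\to P_0$ with $\phi_0(s)\notin\phi_0(W)$, which already separates $s$ from every element of $V\cap W$; for each of the finitely many $v\in V\setminus W$ use residual finiteness of $S$ to choose $\phi_v$ with $\phi_v(s)\neq\phi_v(v)$; then the product homomorphism $\phi_0\times\prod_{v}\phi_v$ into a finite semigroup separates $s$ from all of $V$.

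The hard part will be constructing $W$. The naive choice $W=J_k$ for large $k$ does exclude $s$ when $s$ lies in only finitely many of the powers $V^j$, and then $V\setminus J_k$ is finite; this is what happens in torsion-free examples such as $S=\mathbb N$. However it can fail badly: if $s$ behaves like (a translate of) an idempotent it may lie in $V^j$ for arbitrarily large $j$, so that $s\in J_k$ for every $k$, and $J_k$ may simultaneously miss an infinite ``low-layer'' part of $V$. The crux is therefore to build a subsemigroup that re-generates cofinitely many elements of $V$ while excluding the finitely many short products $v_1\cdots v_{j_0}$ that can equal $s$, where $j_0$ with $2\le j_0\le n$ is the least power having $s\in V^{j_0}$; this is exactly where the bound $n$, the reduction $V^{n+1}\subseteq V$, and the SSS hypothesis on $S$ must all be brought to bear. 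Equivalently, the Proposition asserts that every such $V$ is closed in the profinite topology of $S$, and the difficulty is concentrated in showing that the closure of $V$ picks up no point of $\langle V\rangle$ beyond those already in $V$.
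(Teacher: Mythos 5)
Your proposal proves only the easy half of the statement. The observation that $\langle V\rangle=V\cup V^2\cup\dots\cup V^n$ and the case $s\notin\langle V\rangle$ coincide with the opening of the paper's proof, and patching finitely many separating homomorphisms together is routine. But for the substantive case $s\in\langle V\rangle\setminus V$ you only formulate a plan --- produce a subsemigroup $W\le S$ (or finitely many) with $s\notin W$ and $V\setminus W$ finite --- and then explicitly defer its execution (``the crux is therefore to build\dots''). Nothing in the proposal shows how the hypotheses $V^{n+1}\subseteq V$ and strong subsemigroup separability would actually be brought to bear, so this is a statement of where the difficulty lies, not a proof.

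Worse, the plan itself is unworkable: such a $W$ need not exist. Take $S=\bigoplus_{i\in\mathbb{N}}\mathbb{Z}/3\mathbb{Z}$, written additively. Its nonempty subsemigroups are exactly its subgroups (if $x\in W$ then $x+x+x=0\in W$ and $x+x=-x\in W$), i.e.\ its $\mathbb{F}_3$-subspaces, and any element outside a subspace is separated from it by composing the quotient map with a linear functional to $\mathbb{Z}/3\mathbb{Z}$; hence $S$ is SSS. Let $\sigma\colon S\to\mathbb{Z}/3\mathbb{Z}$ be the coordinate-sum homomorphism, $V=\sigma^{-1}(1)$ and $s=0$. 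Then $V^{4}\subseteq\sigma^{-1}(4)=\sigma^{-1}(1)=V$, so the hypothesis holds with $n=3$, and $s\in\langle V\rangle=S$ but $s\notin V$. However, every nonempty subsemigroup of $S$ contains $0=s$, so there is no subsemigroup avoiding $s$ at all, let alone one (or finitely many) covering the infinite set $V$ up to a finite remainder; yet separation holds trivially via $\sigma$ itself. The moral, which the paper's proof is built around, is that separating congruences need not come from subsemigroups avoiding $s$. The paper instead inducts on $n$ over the class of sets satisfying $X^{k+1}\subseteq X$ --- a class that, unlike subsemigroups, is closed (with a drop in exponent) under the operation $V\mapsto V\cap V^{i+1}$ --- and decomposes $V=Z\cup L_1\cup\dots\cup L_{n-1}$, where $Z$ consists of the elements of $V$ that do not left-divide $s$ (these generate a subsemigroup avoiding $s$) and $L_i=\{\ell\in V\mid s\in\ell V^i\}$. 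It separates $s$ from each $L_i$ by a congruence argument rather than by enclosure: assuming $s$ cannot be separated from $L_i$, one first shows $sV^{n}\subseteq X_i=V\cap V^{i+1}$, and then a finite index congruence $\sim$ separating $s$ from $X_i$ (available by induction) yields $\ell\in L_i$ with $s\sim\ell$, whence writing $s=\ell u$ with $u\in V^i$ and applying compatibility repeatedly gives $s\sim su^n\in sV^n\subseteq X_i$, a contradiction. Any completion of your approach would have to be rebuilt around a mechanism of this kind; the enclosure-by-subsemigroups reduction cannot succeed.
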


\begin{proof}
	We proceed by induction on $n$.
	The base case $n=1$ corresponds to $V$ being a subsemigroup, and the result follows from $S$ being SSS\@.
	
	Now consider $n>1$.
	Our inductive hypothesis is that for all $U \subseteq S$ such that $U^{k+1} \subseteq U$ for some $k \in \{1, 2, \dots, n-1\}$, and for all $s' \in S\setminus U$, we can separate $s'$ from $U$.
	Let $V$ and $s$ be as in the statement of the proposition.
	If $s \notin \langle V \rangle$ then as $S$ is SSS, we can separate $s$ from $\langle V \rangle $ and in particular from $V$.
	
	So suppose that $s \in \langle V \rangle$.
	Note that as $V^{n+1} \subseteq V$, we have that $\langle V \rangle =V \cup V^2 \cup \dots \cup V^n$ and that $V^n$ is a subsemigroup of $S$.
	Let $L=\{\ell \in V \mid s \in \ell V^i, 1 \leq i \leq n-1\}$, which is some set of left-divisors of $s$.
	As $s \in \langle V \rangle$, we have that $L$ is non-empty.
	Let $Z =V \setminus L$.
	Then we claim that $s \notin \langle Z \rangle$.
	Indeed if $s \in \langle Z \rangle$, then $s=zw$ for some $z \in Z$ and $w \in Z^k$, where $k \in \mathbb{N} \cup \{0\}$.
	If $k =0$ then $s =z \in Z \subseteq V$, which is a contradiction.
	Otherwise, as $V^{n+1} \subseteq V$, we have $w \in V^i$ for some $i \in \{1, 2, \dots, n\}$.
	If $w \in V^n$, then $s \in V^{n+1} \subseteq V$, which is a contradiction.
	Then $w \in V^i$ for some $1 \leq i \leq n-1$.
	But then $z \in L$, which is a contradiction.
	So $s \notin \langle Z \rangle$.
	As $S$ is SSS, $s$ can be separated from $\langle Z \rangle$ and in particular $s$ can be separated from $Z$.
	
	For $i \in \{1, 2, \dots, n-1\}$, define $X_i=V \cap V^{i+1}$.
	Then we claim that
	\begin{equation}
	\label{eq1}
	X_i^{n-i+1} \subseteq X_i.
	\end{equation}
	To see this let $x_1, x_2, \dots, x_{n-i+1} \in X_i$.
	Firstly, as $X_i = V \cap V^{i+1}$, we have $x_1 \in V^{i+1}$ and $x_2, \dots, x_{n-i+1} \in V$.
	Then 
	\[x_1x_2 \dots x_{n-i+1} \in V^{(i+1)+(n-i+1-1)}=V^{n+1} \subseteq  V.\]
	Secondly, noting that $n-i+1 \geq 2$, we have $x_1, x_2 \in V^{i+1}$.
	Then 
	\[x_1x_2 \dots x_{n-i+1} \in V^{2(i+1)+(n-i+1-2)}=V^{n+i+1} \subseteq V^{i+1}.\]
	Hence $x_1x_2 \cdots x_{n-i+1} \in V \cap V^{i+1}=X_i$ and we conclude $X_i^{n-i+1} \subseteq X_i$ and (1) holds.
	As $s \notin V$ we have that $s \notin X_i$.
	
	Since $1 \leq n-i \leq n-1$ and $X_i^{n-i+1} \subseteq X_i$, by our inductive hypothesis we can separate $s$ from each $X_i$.
	For $i \in \{1, \dots, n-1\}$, define $L_i=\{\ell \in L \mid s \in \ell V^i\}$.
	Note that $L=\bigcup_{1 \leq i \leq n-1}{L_i}$.
	We show that $s$ can be separated from each $L_i$.
	Suppose that $s$ cannot be separated from some $L_i$.
	We claim that $sV^{n-1} \subseteq V^{i} \cap V^n$.
	First for a contradiction, assume that there exists $u \in V^{n-1}$ such that $su \in S \setminus V^n$.
	Then as $V^n$ is a subsemigroup and $S$ is SSS it must be the case that $su$ can be separated from $V^n$.
	Let $\sim$ be a finite index congruence which separates $su$ from $V^n$.
	As $s$ cannot be separated from $L_i$, there exists $v \in L_i \subseteq V$ such that $s \sim v$.
	But then $su \sim vu \in V^n$ which is a contradiction.
	Hence if $s$ cannot be separated from $L_i$, we have $sV^{n-1} \subseteq V^n$.
	As $L_i$ is non-empty, we have that $s \in V^{i+1}$.
	But as $s \in V^{i+1}$ and $V^{n+1} \subseteq V$, we have that $sV^{n-1} \subseteq V^i$.
	Hence $sV^{n-1} \subseteq V^n \cap V^i$ as claimed.
	From this we get 
	\begin{equation}
	\label{eq2}
	sV^{n} \subseteq V \cap V^{i+1}=X_i.
	\end{equation}
	
	Now let $\sim$ be a finite index congruence that separates $s$ from $X_i$.
	As $s$ cannot be separated from $L_i$, there exists $\ell \in L_i$ such that $\ell \sim s$.
	But as $\ell \in L_i$, there exists $u \in V^i$ such that $\ell u=s$.
	Then, by the compatibility of $\sim$ we have 
	\[s=\ell u\sim su \sim su^2 \sim \dots \sim su^{n-1} \sim su^n.\]
	By the transitivity of $\sim$ we obtain that $s \sim su^n$.
	As $u \in V^{i}$, we have that $u^n \in V^{in}$.
	But as $V^{n+1} \subseteq V$, we conclude that $u^n \in V^{n}$.
	So by Equation (\ref{eq2}) we have $su^n\in sV^n\subseteq X_i$.
	That is $\sim$ does not separate $s$ from $X_i$.
	This is a contradiction and so $s$ can be separated from $L_i$.
	
	As $V=Z \cup L_1 \cup L_2 \cup \dots \cup L_{n-1}$, and for each of the sets in this finite union there exists a finite index congruence which separates $s$ from said set, the intersection of these congruences is a finite index congruence which separates $s$ from $V$.
\end{proof}

The fact that SSS semigroups satisfy the additional separability property of \thref{V^{n+1}} allows us to characterise finite semigroups which are SSS-preserving.
Our characterisation relies on the notion of indecomposability of elements.
For a semigroup $S$ and an element $s \in S$, we say that $s$ is \emph{decomposable} if $s \in S^2$.
In this case, there exist $t, u \in S$ such that $s=tu$.
Otherwise we say that $s$ is \emph{indecomposable.}

\begin{thm}
\thlabel{finite strong}
	A finite semigroup $P$ is strong subsemigroup separability preserving if and only if every element of $P$ is indecomposable or belongs to a subgroup.
\end{thm}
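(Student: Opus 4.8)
\emph{Overall strategy.} The plan is to prove both implications by fixing a point $(s,p)\in S\times P$ and a subsemigroup $U\le S\times P$ with $(s,p)\notin U$, and analysing the slices $U_q=\{x\in S:(x,q)\in U\}$ for $q\in P$. First I would record the structural meaning of the hypothesis. If $p$ lies in a subgroup with idempotent $e$, then $p=ep\in P^2$, so $p$ is decomposable; hence under the stated condition the decomposable elements of $P$ are \emph{exactly} those lying in subgroups, while the indecomposable elements are exactly $P\setminus P^2$. In particular every product in $P$, and so in $S\times P$, has its $P$-coordinate in $P^2$, so no product can have an indecomposable $P$-coordinate. This dichotomy is what drives the sufficiency direction.

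\emph{Sufficiency.} Suppose every element of $P$ is indecomposable or lies in a subgroup; let $S$ be SSS and $(s,p)\notin U$, so $s\notin U_p$. If $p\in P^2$, then $p$ lies in a subgroup, so there is $d\ge 1$ with $p^{d+1}=p$; since $U$ is a subsemigroup, any product of $d+1$ elements of $U_p$ has $P$-coordinate $p^{d+1}=p$, giving $U_p^{\,d+1}\subseteq U_p$. As $s\notin U_p$, \thref{V^{n+1}} separates $s$ from $U_p$ in $S$ by a finite-index congruence $\theta$, and then the finite-index congruence on $S\times P$ that identifies two elements exactly when they agree in the $P$-coordinate and are $\theta$-related in the $S$-coordinate separates $(s,p)$ from $U$. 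If instead $p\notin P^2$ is indecomposable, I would use the congruence $\rho$ that collapses each level $S\times\{q\}$ to a single class for $q\neq p$ and splits level $p$ into the two classes $\{(s,p)\}$ and $(S\setminus\{s\})\times\{p\}$. Because no product in $S\times P$ has $P$-coordinate $p$, any product of two elements lands in a level that $\rho$ collapses to a point, so compatibility of $\rho$ is automatic; $\rho$ has at most $|P|+1$ classes, and its class $\{(s,p)\}$ meets $U$ trivially, so $\rho$ separates $(s,p)$ from $U$.

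\emph{Necessity.} Here I would argue contrapositively: assuming some $p\in P$ is decomposable but lies in no subgroup, I would produce an SSS semigroup $S$ with $S\times P$ not SSS, in the spirit of \thref{strong no}. Taking $S$ to be SSS but not completely separable, \thref{Golubov} supplies $a\in S$ with $\{[a:s]:s\in S\}$ infinite; for any finite-index congruence on $S\times P$ a pigeonhole argument at a suitable level yields $b,c\in S$ with $[a:b]\neq[a:c]$ that are identified, and multiplying one of them up forces a collision at level $p$ inside a subsemigroup $U$ built from sets of the form $(S\setminus\{a\})\times\{\,\cdot\,\}$. The hypothesis that $p$ lies in no subgroup, i.e.\ $p\neq p^{k}$ for all $k\ge 2$, is precisely what keeps the witness point outside $U$, playing the role that the index-$\ge 4$ assumption plays in \thref{strong no}.

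\emph{Main obstacle.} The difficulty is exactly this last step. When $\langle p\rangle$ has index at least $4$, \thref{strong no} applies essentially verbatim. The hard case is low index — for instance $P=\langle x\mid x^3=x^4\rangle$, whose bad element $x^2$ generates a monogenic subsemigroup of index $2$ and which contains no monogenic subsemigroup of index $\ge 4$, so \thref{strong no} cannot be invoked. Here one must target the level $p$ itself rather than a high power; since reaching $P$-coordinate $p$ by a product forces an exact two-factor decomposition, I expect to need the collision to arise from a one-sided division ($u=1$ or $v=1$ in $ubv=a$), which in turn requires choosing $S$ so that the infinitude of $\{[a:s]\}$ is witnessed by one-sided factorisations of $a$. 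Making this choice of $S$ explicit, and checking that the resulting $U$ omits the witness point precisely because $p$ is not a proper power of itself, is where I expect the real work to lie.
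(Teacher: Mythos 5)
Your sufficiency direction is correct and is essentially the paper's own argument: when $p$ is decomposable, the hypothesis forces $p$ into a subgroup, the slice $\{x \in S : (x,p) \in U\}$ satisfies the hypothesis of \thref{V^{n+1}}, and the product congruence on $S \times P$ finishes; when $p$ is indecomposable, your level congruence $\rho$ is a slightly finer but equally valid version of the paper's Rees quotient by the ideal $(S \times P) \setminus \{(s,p)\}$, which is an ideal of finite complement precisely because no product can have $P$-coordinate $p$.

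The necessity direction, however, has a genuine gap, and you flag it yourself: you never actually produce the SSS semigroup $S$ and the subsemigroup $U$ witnessing failure when some $p \in P$ is decomposable but lies in no subgroup. Your plan --- take $S$ SSS but not CS, extract $b, c$ with $[a:b] \neq [a:c]$ via \thref{Golubov}, and force a collision at level $p$ --- runs into exactly the obstruction you describe: since $p$ may admit only two-factor decompositions $p = st$ (e.g.\ $p = x^2$ in $\langle\, x \mid x^3 = x^4 \,\rangle$), the collision must arise from a one-sided factorisation of $a$, and nothing in Golubov's criterion guarantees that the infinitude of $\{[a:s] \mid s \in S\}$ is witnessed by one-sided factorisations. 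The paper resolves this by abandoning the general non-CS semigroup altogether: it takes $S = G$ to be an \emph{infinite SSS group} (infinite groups are never CS, so this is consistent with your intuition, but it is the group structure that does the work). Writing $p = st$, let $U = \langle (g,p) \mid g \in G \setminus \{1_G\} \rangle \leq G \times P$. Every element of $U$ has the form $(g_1 \cdots g_k, p^k)$, so $(1_G, p) \notin U$ precisely because $p^k \neq p$ for all $k \geq 2$. Given a finite index congruence $\sim$ on $G \times P$, the pigeonhole principle at level $s$ yields $g \neq h$ in $G$ with $(g,s) \sim (h,s)$, and then
\[(1_G, p) = (g,s)(g^{-1},t) \sim (h,s)(g^{-1},t) = (hg^{-1}, p) \in U,\]
since $hg^{-1} \neq 1_G$. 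The group inverses are what convert a collision at level $s$ into a collision at level $p$ with a nontrivial $G$-coordinate; this supplies, for \emph{every} pair $g \neq h$, exactly the ``one-sided division'' your sketch requires but which a general non-CS SSS semigroup need not provide. So the missing idea is concrete and single: choose $S$ to be an infinite SSS group and exploit cancellation, rather than trying to adapt the Golubov-criterion argument of \thref{strong no}.
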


\begin{proof}
	($\Rightarrow$) We prove the contrapositive so assume that $p \in P$ is not contained in a subgroup but there exist $s, t \in P$ such that $st=p$.
	As $p$ is not contained in a subgroup, we have that $p^n\neq p$ for all $ n\geq 2$.
	Let $G$ be an infinite SSS group (e.\@g.\@ \cite[Example 5.4]{me}).
	Let $U \leq G \times P$ be generated by the set $\{(g, p) \mid g \in G \setminus \{1_G\}\}$.
	As $p^n \neq p$ for all $n \geq 2$, we have that $(1_G, p) \notin U$.
	Let $\sim$ be a finite index congruence on $G \times P$.
	Then there exist $g, h \in G$ with $g\neq h$ such that $(g, s) \sim (h, s)$.
	Then 
	\[(1_G, p)=(g, s)(g^{-1}, t) \sim (h, s)(g^{-1}, t)=(hg^{-1}, p) \in U.\]
	Hence $G \times P$ is not SSS and therefore $P$ is not SSS-preserving. 
	
	($\Leftarrow$) Now assume that $P$ is a finite semigroup in which every element not contained in a subgroup is indecomposable.
	Let $S$ be an SSS semigroup, let $U \leq S \times P$ and let $(s, p) \in (S \times P) \setminus U$. 
	If $ s \notin \pi_S(U) \leq S$, then we can separate $(s, p)$ from $U$ by factoring through $S$ and invoking the strong subsemigroup separability of $S$.
	If $ p \notin \pi_P(U)$, then $\pi_P$ separates $(s, p)$ from $U$.
	
	Now assume that both $s \in \pi_S(U)$ and $p \in \pi_P(U)$.
	If $p$ is not contained within a subgroup of $P$, then $p$ is indecomposable.
	Then $(s, p)$ is indecomposable in $S \times P$.
	Let $I=(S \times U) \setminus \{(s, p)\}$.
	Then $I$ is an ideal of finite complement in $S \times U$ and $[(s, p)]_I=\{(s, p)\}$.
	In particular, $(s, p)$ is separated from $U$ in the Rees quotient of $S \times P$ by $I$.
	
	The final case to consider is that $p$ is contained in a subgroup of $P$.
	Then for some $n \in \mathbb{N}$, we have that $p^{n+1}=p$.
	Let $V =\pi_S(U \cap (S \times \{p\})$.
	First note that $s \notin V$ as $(s, p) \notin U$.
	Secondly, as $p^{n+1}=p$, we have $V^{n+1} \subseteq V$.
	Then by \thref{V^{n+1}} we have that $s$ can be separated from $V$.
	So there exists a finite semigroup $Q$ and a homomorphism $\phi: S \to Q$ such that $\phi(v) \notin \phi(V)$.
	Define $\overline{\phi} : S \times P \to Q \times P$ by $(a, b) \mapsto (\phi(a), b)$.
	Then $\overline{\phi}$ is a homomorphism which separates $(s, p)$ from $U$.
	Hence $S \times P$ is SSS and so $P$ is SSS-preserving.
\end{proof}

\begin{rem}
	As the set $S^2$ is an ideal of a semigroup $S$, \thref{finite strong} is equivalent to saying that a finite semigroup $P$ is SSS-preserving if and only if $P$ is the ideal extension of a union of groups by a null semigroup. 
\end{rem}

\begin{cor}
The following families of finite semigroups are strong subsemigroup separability preserving: groups, Clifford semigroups, completely simple semigroups, completely regular semigroups, bands, and null semigroups.
\end{cor}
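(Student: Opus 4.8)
The plan is to apply \thref{finite strong}: since every semigroup in each of the listed families is finite, it suffices to verify that in each such semigroup every element is either indecomposable or belongs to a subgroup. For five of the six families this verification will be immediate, because every element belongs to a subgroup; only the null semigroups will call for a slightly different observation.

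First I would dispose of groups, Clifford semigroups, completely simple semigroups, completely regular semigroups and bands simultaneously, by recalling that each of these is \emph{completely regular}, that is, a union of groups, so that every element lies in some subgroup. For a group this is trivial, the whole group being a subgroup containing each of its elements. A band consists entirely of idempotents, and each idempotent $e$ lies in the trivial subgroup $\{e\}$, since $e^2=e$ makes $\{e\}$ a group with identity $e$. Completely simple semigroups and Clifford semigroups are standard instances of completely regular semigroups, the former via the Rees--Suschkewitsch description as Rees matrix semigroups over groups and the latter as semilattices of groups; in both cases every element lies in a group $\mathcal{H}$-class. In each of these families the second alternative of \thref{finite strong} (``belongs to a subgroup'') holds for every element, so the criterion is satisfied and the family is SSS-preserving.

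It then remains to treat the null semigroups. If $P$ is a finite null semigroup with zero $0$, then $P^2=\{0\}$, so the only decomposable element of $P$ is $0$ itself, and $0$ belongs to the trivial subgroup $\{0\}$. Every other element of $P$ lies outside $P^2$ and is hence indecomposable. Thus every element of $P$ is indecomposable or belongs to a subgroup, and \thref{finite strong} applies once more.

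I expect no genuine obstacle here: the content lies entirely in recalling that the completely regular semigroups are precisely the unions of groups, and in inspecting the elementary structure of a null semigroup. The only step that is not a one-line observation is confirming that completely simple and Clifford semigroups fall under the completely regular umbrella, but this is routine structure theory. Alternatively, one could route the entire argument through the remark following \thref{finite strong}: each listed family is visibly an ideal extension of a union of groups by a null semigroup, with the completely regular families realised as the trivial such extensions and the null semigroups as the opposite extreme.
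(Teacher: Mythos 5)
Your proof is correct and proceeds exactly as the paper intends: the corollary is stated as an immediate consequence of \thref{finite strong}, and your verification—that the first five families are completely regular (unions of groups) so every element lies in a subgroup, while in a null semigroup every non-zero element is indecomposable and the zero lies in the trivial subgroup $\{0\}$—is precisely the routine check being left to the reader.
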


\begin{cor}
A finite monoid is strong subsemigroup separability preserving if and only if it is a union of groups.
\end{cor}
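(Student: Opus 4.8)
The plan is to derive this corollary directly from \thref{finite strong}. By that theorem, a finite semigroup $P$ is SSS-preserving if and only if every element of $P$ is indecomposable or belongs to a subgroup. So the task reduces to showing that, for a \emph{monoid} $P$, the condition ``every element is indecomposable or belongs to a subgroup'' is equivalent to ``every element belongs to a subgroup'', i.e.\ that $P$ is a union of groups.

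The key observation is that a monoid has no indecomposable elements at all. Indeed, if $1$ denotes the identity of $P$, then for every $p \in P$ we have $p = 1 \cdot p$, so $p \in P^2$ and hence $p$ is decomposable. Consequently $P^2 = P$, and the ``indecomposable'' disjunct in the criterion of \thref{finite strong} is never satisfied. The criterion therefore collapses to the single requirement that every element of $P$ lie in some subgroup of $P$, which is precisely the definition of $P$ being a union of groups (equivalently, a completely regular semigroup). This gives both implications simultaneously: $P$ is SSS-preserving if and only if it is a union of groups.

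I expect no substantial obstacle here, since the argument is essentially an immediate specialization of \thref{finite strong}. The only point meriting care is the verification that the presence of an identity forces every element to be decomposable, which follows at once from $p = 1 \cdot p$; one should also note that ``union of groups'' is taken in the standard sense that each element lies in some subgroup, so that it matches the condition extracted from \thref{finite strong}.
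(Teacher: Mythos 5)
Your proof is correct and is exactly the intended derivation: the paper states this corollary without proof as an immediate consequence of \thref{finite strong}, and your observation that $p = 1\cdot p$ makes every element of a monoid decomposable, collapsing the criterion to ``every element lies in a subgroup,'' is precisely the argument the paper implicitly relies on.
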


We conclude this section with some open problems.

\begin{op}
Is there a characterisation of when the direct product of two strongly subsemigroup separable semigroups is itself strongly subsemigroup separable?
\end{op}

\begin{op}
\thlabel{2strong}
Is it true that the direct product of two strongly subsemigroup separable semigroups is weakly subsemigroup separable?
\end{op}

\section{Weak Subsemigroup Separability}

The behaviour of weak subsemigroup separability with respect to the direct product is more complicated than that of the separability properties we have already discussed. 
This is exhibited in \thref{2strong}, which states that even when we strengthen the factor semigroups to be SSS, we still do not know if this guarantees that the direct product is WSS\@.
We also see for the first time an example of a separability property which is not necessarily inherited by the factors of a direct product (\thref{N cross Z}).
This complexity means the results in this section are less comprehensive than those for the other separability properties considered.
We show that the direct product of a WSS semigroup with a finite semigroup is not necessarily WSS (\thref{left-zero counter}).
Although we do not characterise when a finite semigroup is WSS-preserving, we show that finite nilpotent semigroups are WSS-preserving (\thref{weak k-nilpotent}).
We are able to show that the direct product of two WSS semigroups is MSS (\thref{2WSS}).
We begin with the example that shows that weak subsemigroup separability may not be inherited by the factors of a direct product.

\begin{eg}
\thlabel{N cross Z}
Consider the semigroup $\mathbb{N} \times \mathbb{Z}$.
As this is a residually finite semigroup with $\mathbb{N}$ as a homomorphic image, it is WSS by \cite[Proposition 5.1]{me}.
However, $\mathbb{Z}$ is not WSS by \cite[Example 2.6]{me}.
\end{eg}

\begin{rem}
\thlabel{MSSfactor}
While \cite[Example 2.6]{me} states that $\mathbb{Z}$ is not WSS, the subsemigroup used in demonstrating this is $\mathbb{N}$.
As $\mathbb{N}$ is monogenic, we have that $\mathbb{Z}$ is not MSS\@.
Therefore $\mathbb{N} \times \mathbb{Z}$ is also an example of a direct product which is MSS but one of the factors is not MSS\@.
\end{rem}

It also turns out that the direct product of two WSS semigroups need not be WSS, even when one of the factors in finite.
To show this we first need to construct a specific instance of a WSS semigroup.

\begin{definition}
Let $\FC_2$ be the free commutative semigroup on the set $\{a, b\}$.
Let $N=\{x_z \mid z \in \mathbb{Z}\} \cup \{0\}$ be a null semigroup with zero element $0$.
Let $\phi: \FC_2 \to \mathbb{Z}$ be given by $a^ib^j \mapsto i-j$.
Consider the set $\mathcal{S}[\FC_2, \mathbb{Z}, \phi]=\FC_2 \cup N$. 
We extend the multiplication on $\FC_2$ and $N$ in the following manner.
For $w \in \FC_2$ and $x_z \in N$, define
\begin{align*}
&x_z \cdot w=x_{z +\phi(w)}, \\
&w \cdot x_z=x_{z-\phi(w)}, \\
&0 \cdot w=w \cdot 0 =0.
\end{align*}
Then $\mathcal{S}[\FC_2, \mathbb{Z}, \phi]$ is an example of semigroup of type given in \cite[Construction 5.5]{me} and is WSS by \cite[Proposition 5.8]{me}.
\end{definition}

\begin{eg}
	\thlabel{left-zero counter}
	Let $S=\mathcal{S}[\FC_2, \mathbb{Z}, \phi]$ and let $L$ be a non-trivial left-zero semigroup.
	Then $S \times L$ is not WSS\@.
	
	Let $y, z \in L$ be distinct. Let 
	\[U=\langle(a,y), (x_1, z)\rangle \leq S \times L.\]
	Then one computes that
	\[U=\{(a^i, y) \mid i \in \mathbb{N}\} \cup \{(x_i, z) \mid i \in \mathbb{N}\} \cup \{(x_i, y) \mid i \in \mathbb{Z}\} \cup \{(0, y), (0, z)\}.\]
	In particular, $(x_0, z) \notin U$. 
	Let $\sim$ be a finite index congruence on $S \times L$. 
	Then there exist $i, j \in \mathbb{N}$ with $i < j$ such that $(x_i, z) \sim (x_j, z)$. 
	Then
	\[(x_0, z)=(x_i, z)(b^i, z) \sim (x_j, z)(b^i, z)=(x_{j-i}, z) \in U.\]
	Hence $S \times L$ is not weakly subsemigroup separable.
	By a similar argument, it can be shown that the direct product of $S=\mathcal{S}[\FC_2, \mathbb{Z}, \phi]$ with a non-trivial right-zero semigroup is not WSS\@.
\end{eg}

\thref{left-zero counter} shows that (non-trivial) finite left-zero and right-zero semigroups are not weakly subsemigroup separability preserving.
This is somewhat surprising as finite left-zero and right-zero semigroups are strongly subsemigroup preserving by \thref{finite strong} and also turn out to be monogenic subsemigroup preserving by \thref{monogenic finite}.
Adding yet another twist to the story, the following theorem shows that finite nilpotent semigroups are WSS-preserving, even though this class contains semigroups which are neither SSS-preserving nor MSS-preserving. 
For example the semigroup with presentation $\langle \, x \mid x^3=x^4 \,\rangle$ is a finite 3-nilpotent semigroup which fails to meet the criteria to be either SSS-preserving or MSS-preserving.

\begin{thm}
	\thlabel{weak k-nilpotent}
	The direct product of a weakly subsemigroup separable semigroup with a residually finite nilpotent semigroup is weakly subsemigroup separable.
\end{thm}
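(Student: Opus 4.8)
The plan is to let $S$ be WSS and $P$ be a residually finite nilpotent semigroup, so $P$ has a zero $0$ with $P^k=\{0\}$ for some $k$, and to separate an arbitrary point $(s,p)$ from an arbitrary finitely generated subsemigroup $U=\langle(s_1,p_1),\dots,(s_n,p_n)\rangle$ with $(s,p)\notin U$. The guiding observation is that nilpotency of $P$ forces the second coordinate of any element of $U$ that is a product of at least $k$ of the generators to lie in $P^k=\{0\}$, hence to be $0$. I would therefore split $U$ according to its second coordinate: put $U_0=\{(w,0)\in U\}$ and $U_+=U\setminus U_0$. Any element with nonzero second coordinate must come from a product of fewer than $k$ generators (a longer product lands in $P^k$), and there are only finitely many such products, so $U_+$ is finite. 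The strategy is then to separate $(s,p)$ from $U_0$ and from $U_+$ by separate finite-index congruences and intersect them.

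Separating $(s,p)$ from the finite set $U_+$ is immediate: $S$ is residually finite since it is WSS, and $P$ is residually finite by hypothesis, so $S\times P$ is residually finite, and a residually finite semigroup separates any point from any finite set not containing it.

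The substantive step is separating $(s,p)$ from $U_0$, and the crux is to show that $U_0=W\times\{0\}$ where $W=\pi_S(U_0)$ is a \emph{finitely generated} subsemigroup of $S$. Writing $M$ for the set of first coordinates of products of at least $k$ generators and $F'$ for the (finite) set of first coordinates of products of fewer than $k$ generators whose $P$-component is $0$, one checks directly that $W=M\cup F'$. The semigroup $M$ is generated by the finite set $G$ of first coordinates of products of between $k$ and $2k-1$ generators: given a product of length $\ell\ge k$, peel off the first $k$ factors (a block in $G$) and induct on the remaining length, which stays at least $k$ once $\ell\ge 2k$. Hence $W=\langle F'\cup G\rangle$ is finitely generated. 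I expect this finite-generation argument to be the main obstacle; the rest is routine once the split of $U$ by its second coordinate is in place.

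Finally I would finish by cases on $p$. If $p\neq 0$, then since every element of $U_0$ has second coordinate $0$, residual finiteness of $P$ separates $p$ from $0$, and composing with $\pi_P$ separates $(s,p)$ from $U_0$. If $p=0$, then $(s,0)\notin U_0=W\times\{0\}$ forces $s\notin W$; as $S$ is WSS and $W$ is finitely generated, there is a homomorphism $\phi\colon S\to Q$ onto a finite semigroup with $\phi(s)\notin\phi(W)$, and the map $(a,b)\mapsto\phi(a)$ separates $(s,0)$ from $U_0$. Intersecting a congruence that separates $(s,p)$ from $U_0$ with one that separates it from $U_+$ gives a finite-index congruence separating $(s,p)$ from $U$, so $S\times P$ is WSS.
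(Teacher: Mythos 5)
Your proposal is correct and follows essentially the same route as the paper's proof: both split $U$ by second coordinate using nilpotency (the part over nonzero elements of $P$ is finite and handled by residual finiteness of $S\times P$), and both rest on the same key finite-generation fact, namely that the first coordinates of products of at least $k$ generators form a subsemigroup generated by the products of length $k$ through $2k-1$, so that weak subsemigroup separability of $S$ applies to the zero fiber. The only cosmetic differences are that the paper works fiber-by-fiber over $\pi_P(U)$ and separates $s$ from the zero fiber $Y_0$ in two steps (WSS for $T^k$, residual finiteness for the finite remainder $Y_0\setminus T^k$), whereas you lump all nonzero fibers into one finite set and note that $W=Y_0$ is itself finitely generated, so a single application of WSS suffices.
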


\begin{proof}
	Let $S$ be a WSS semigroup and let $N$ be a residually finite $k$-nilpotent semigroup with zero element 0, for some $k \in \mathbb{N}$. 
	Let $U \leq S \times N$ be finitely generated and let $(s,n) \in (S \times N) \! \setminus \! U$. 
	Fix a finite generating set for $U$, which has the form
	\[(X_0 \times \{0\}) \cup (X_1 \times \{n_1\}) \cup \cdots \cup (X_j \times \{n_j\}),\]
	where $n_1, n_2, \dots, n_j \in N \setminus\{0\}$ and $X_0, X_1, \dots, X_j$ are finite subsets of $S$.
	Let $X= \bigcup_{i=0}^j{X_i}$ and let $T=\langle X \rangle \leq S$. 
	
	Let $Z=\pi_N(U)=\{z_0, z_1, z_2, \cdots, z_m\}$ where $z_0=0$.
	Note $Z$ is finite as $k$-nilpotent semigroups are locally finite.
	For $0 \leq i \leq m$ let 
	\[Y_i=\pi_S(U \cap (S \times \{z_i\})).\]
	Then for $1 \leq i \leq m$ the set $Y_i$ is finite. 
	To see this first note that $Y_i$ is a subset of $T$.
	Then for $y \in Y_i$, we can write $y$ as a product of elements of $X$.
	The maximum length of the product is $k-1$, as $N$ is a $k$-nilpotent semigroup and $z_i$ is a non-zero element of $N$. 
	As $X$ is a finite set, there are only finitely many such $y$ and so $Y_i$ is finite.
	
	Now consider $Y_0$.
	Certainly $T^k \subseteq Y_0$ as $N$ is a $k$-nilpotent semigroup.
	We have that $Y_0 \setminus T^k$ is finite as any element of $Y_0 \setminus T^k$ can be expressed as product over $X$ of length at most $k-1$.
	
	If $s \notin \pi_S(U)=T$ then we can separate $(s, n)$ from $U$ by factoring through $S$ and invoking the weak subsemigroup separability of $S$. 
	Similarly, if $t \notin \pi_N(U)=Z$ then we can separate $(s,n)$ from $U$ by factoring through $N$ and using the residual finiteness of $N$.
	
	Now assume that $s \in \pi_S(U)$ and $n \in \pi_N(U)$.
	Then either
	\begin{enumerate}[noitemsep, label=(\roman*)]
		\item $n=0$ and $s \notin Y_0$, or 
		\item $n=z_i$ for some $1 \leq i \leq m$ and $s \notin Y_i$. 
	\end{enumerate}
	
	(i) First note that $T^k \leq S$ is finitely generated by the set
	\[X^k \cup X^{k+1} \cup \dots \cup X^{2k-1}.\]
	As $s \notin T^k$ and $S$ is WSS, there exists a finite semigroup $P_1$ and homomorphism $\phi_1: S \to P_1$ such that $\phi_1(s) \notin \phi_1(T^k)$. 
	As $s \notin Y_0 \setminus T^k$ and $S$ is residually finite, there exists a finite semigroup $P_2$ and homomorphism $\phi_2: S \to P_2$ such that $\phi_2(s) \notin \phi_2(Y_0 \setminus T^k)$. 
	Therefore the homomorphism $\phi: S \to P_1 \times P_2$ given by $a \mapsto (\phi_1(a), \phi_2(a))$ separates $s$ from $Y_0$.
	
	As $N$ is residually finite, there exists a finite semigroup $Q$ and homomorphism $\psi:N \to Q$ such that $\psi(0) \notin \psi(Z \setminus\{0\})$.
	Then the homomorphism $\phi \times \psi : S \times N \to (P_1 \times P_2) \times Q$ given by $(a, b) \mapsto (\phi(a), \psi(b))$ separates $(s, n)$ from $U$.
	
 (ii) As $S$ is WSS it is residually finite by \cite[Proposition 2.1]{me} and so there exists a finite semigroup $P$ and homomorphism $\phi: S \to P$ such that $\phi(s) \notin \phi(Y_i)$. 
 As $N$ is residually finite, there exists a finite semigroup $Q$ and homomorphism $\psi:N \to Q$ such that $\psi(z_i) \notin \psi(Z \setminus \{z_i\})$.
 Then $\phi \times \psi : S \times N \to P \times Q$ given by $(a, b) \mapsto (\phi(a), \psi(b))$ separates $(s, n)$ from $U$.
 This completes the proof that $S \times N$ is WSS\@.
\end{proof}

A characterisation for when a finite semigroup is weakly subsemigroup separability preserving is not known.
We leave this as one of the open problems concerning weak subsemigroup separability and direct products.
But before we state the open problems, we show that the direct product of two WSS semigroups is MSS\@.

\begin{thm}
\thlabel{2WSS}
The direct product of two weakly subsemigroup separable semigroups is monogenic subsemigroup separable.
\end{thm}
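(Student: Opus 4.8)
The plan is to fix an element $(s,t)\in S\times T$ together with a monogenic subsemigroup $Y\subseteq (S\times T)\setminus\{(s,t)\}$, write $Y=\langle(a,b)\rangle=\{(a^k,b^k)\mid k\ge 1\}$, and then produce finitely many finite index congruences on $S\times T$ whose meet separates $(s,t)$ from $Y$. Observe that $\pi_S(Y)=\langle a\rangle$ and $\pi_T(Y)=\langle b\rangle$ are monogenic, hence finitely generated, subsemigroups of $S$ and $T$. Since WSS semigroups are residually finite (\cite[Proposition 2.1]{me}) and a direct product of residually finite semigroups is residually finite, $S\times T$ is residually finite; in particular $(s,t)$ can be separated from any finite subset of $(S\times T)\setminus\{(s,t)\}$ by meeting finitely many finite index congruences.

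First I would dispose of the case $s\notin\langle a\rangle$ (and symmetrically $t\notin\langle b\rangle$). Here, since $S$ is WSS and $\langle a\rangle$ is finitely generated, there is a finite index congruence on $S$ separating $s$ from $\langle a\rangle\supseteq\{a^k\mid k\ge1\}$; pulling it back along $\pi_S$ separates $(s,t)$ from all of $Y$. Likewise, if $Y$ is finite --- which happens exactly when both $\langle a\rangle$ and $\langle b\rangle$ are finite --- residual finiteness of $S\times T$ finishes immediately. So the remaining, and genuinely substantive, case is $s\in\langle a\rangle$ and $t\in\langle b\rangle$ with $Y$ infinite, and we may assume without loss of generality that $\langle a\rangle$ is infinite.

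In that case the powers of $a$ are pairwise distinct, so there is a unique $i_0\ge1$ with $a^{i_0}=s$; moreover $t\ne b^{i_0}$, since otherwise $(a^{i_0},b^{i_0})=(s,t)\in Y$. I would now split the index set into $k>i_0$, $k<i_0$, and $k=i_0$. For $k>i_0$, the set $\{a^k\mid k>i_0\}$ is a subsemigroup of $S$ (adding exponents keeps them $>i_0$), it is finitely generated (its indecomposable elements are the finitely many $a^k$ with $i_0<k\le 2i_0+1$), and it does not contain $s=a^{i_0}$; so WSS of $S$ separates $s$ from it, and pulling back along $\pi_S$ separates $(s,t)$ from every $(a^k,b^k)$ with $k>i_0$. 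The indices $k<i_0$ contribute only finitely many points $(a^k,b^k)\ne(s,t)$, and the index $k=i_0$ contributes the single point $(s,b^{i_0})$, which differs from $(s,t)$ because $b^{i_0}\ne t$; all of these are handled by residual finiteness. Meeting the resulting finite collection of finite index congruences separates $(s,t)$ from $Y$.

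I expect the main obstacle to be precisely this last case, where both coordinates of $(s,t)$ lie inside the cyclic subsemigroups $\langle a\rangle$ and $\langle b\rangle$, so that neither projection alone can separate $(s,t)$ from $Y$. The resolution is the observation that infiniteness of one factor's cyclic part forces the matching index $i_0$ to be unique, which lets one isolate the ``forward tail'' $\{a^k\mid k>i_0\}$ as a finitely generated subsemigroup avoiding $s$ and invoke WSS there, reducing everything else to a finite separation problem solved by residual finiteness. The points that would need careful checking are the finite generation of that tail and the inequality $t\ne b^{i_0}$, both of which are routine.
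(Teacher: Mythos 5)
Your proposal is correct and follows essentially the same route as the paper's proof: dispose of the cases where a coordinate lies outside the projected cyclic subsemigroup via WSS and a projection, handle finite $Y$ by residual finiteness (which follows from WSS of the factors), and in the substantive case separate off the finitely generated tail $\{a^k \mid k > i_0\}$ using WSS while treating the finitely many remaining powers by residual finiteness of the product. The paper's argument is the same decomposition (with the points $k \le i_0$ lumped into one finite set rather than split into $k < i_0$ and $k = i_0$), including the same observation that the tail is generated by $a^{i_0+1}, \dots, a^{2i_0+1}$.
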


\begin{proof}
Let $S$ and $T$ be WSS semigroups.
Let $U =\langle (s, t) \rangle \leq S \times T$ be a monogenic subsemigroup and let $(x, y) \in (S \times T) \setminus U$.
If $ x \notin \pi_S(U)=\langle s \rangle$ then we can separate $(x, y)$ from $U$ by factoring through $S$ and using the weak subsemigroup separability of $S$.
By a similar argument, we can separate $(x, y)$ from $U$ if $y \notin \pi_T(U)$.

Now consider the case that $x \in \pi_S(U)$ and $y \in \pi_S(T)$.
Then $x=s^i$ and $y=t^j$, where $i, j \in \mathbb{N}$ such that $i\neq j$, $s^i \neq s^j$ and $t^i \neq t^j$.
Firstly we consider the case when at least one of $\langle s \rangle$ and $\langle t \rangle$ is infinite.
Without loss of generality, assume that $\langle s \rangle \cong \mathbb{N}$.
In this instance $s ^i \notin \langle s^{i+1}, s^{i+2}, \dots, s^{2i+1} \rangle=\{s^k \mid k \geq i+1\}$.
Hence we can separate $(x, y)$ from $\{(s^k, t^k) \mid k \geq i+1\}$ by factoring through $S$ and using the weak subsemigroup separability of $S$.
That is, there exists a finite semigroup $P_1$ and a homomorphism $\phi_1 : S \times T \to P_1$ such that $\phi_1(x,y) \neq \phi_1(s^n, t^n)$ for all $n \geq i+1$. 
As $S$ and $T$ are both WSS, they are both residually finite by \cite[Proposition 2.1]{me}.
Hence $S \times T$ is residually finite by \cite[Theorem 2]{Gray2009}.
Then we can separate $(x, y)$ from the finite set $\{(s^k, t^k) \mid k \leq i\}$.
That is, there exists a finite semigroup $P_2$ and homomorphism $\phi_2: S \times T \to P_2$ such that $\phi_2(x, y) \neq \phi_2(s^k, t^k)$ for $1 \leq k \leq i$.
Then the homomorphism $\phi: S \times T \to P_1 \times P_2$ given by $\phi(a, b)=(\phi_1(a, b), \phi_2(a, b))$ separates $(x, y)$ from $U$. 

The final case to consider is when both $\langle s \rangle$ and $\langle t \rangle$ are finite, in which case $U$ is finite.
As $S \times T$ is residually finite we can separate $(x, y)$ from $U$.
Hence $S \times T$ is MSS as desired.
\end{proof}

We conclude this section with some open problems.

\begin{op}
Is there a characterisation of when a finite semigroup is weakly subsemigroup preserving?	
\end{op}

\begin{op}
Is there a characterisation of when a direct product of two weakly subsemigroup separable semigroups is itself weakly subsemigroup separable?
\end{op}

\begin{op}
If the direct product of two semigroups is weakly subsemigroup separable, is at least one of the factors weakly subsemigroup separable?	
\end{op}

\section{Monogenic subsemigroup Separability}

We have already seen that if a direct product of two semigroups is MSS, then it need not be that both factors are MSS (\thref{N cross Z}).
Here we also show that the direct product of two MSS semigroups need not be MSS, even if one of the factors is finite (\thref{monogenic eg}).
In the process of constructing this example, we develop the necessary theory to determine when a finite semigroup is MSS-preserving (\thref{monogenic finite}).
We complete this section by showing the direct product of a WSS semigroup with a residually finite periodic semigroup is always MSS (\thref{final}).

We shall first turn our attention to finding an example of a direct product of two MSS semigroups which is not MSS\@.
Key to this is a particular group embeddable semigroup $A$, which we will define shortly.
In showing that $A$ is group embeddable we use a criterion given by Adian, which we now review.

\begin{definition}
Let $\langle \, X \mid R \, \rangle$ be a presentation.
Consider a relation $(u, w) \in R$.
The \emph{left pair} of $(u, w)$ is the pair $(x, y)$, where $x$ is the leftmost letter of $u$ and $y$ is the leftmost letter of $w$.
The \emph{right pair} of $(u, w)$ is the pair $(z, t)$, where $z$ is the rightmost letter of $u$ and $t$ is the rightmost letter of $w$.
The \emph{left graph of the presentation} is the graph with vertex set $X$ such that $\{x, y\}$ is an edge if and only if $(x, y)$ is a left pair of some relation.
Note that multiple edges and loops are allowed.
The \emph{right graph of the presentation} is the graph with vertex set $X$ such that $\{z, t\}$ is an edge if and only if $(z, t)$ is a right pair of some relation.
The presentation $\langle \, X \mid R \, \rangle$ is said to have no \emph{cycles} if both its left graph and right graph have no cycles (here loops are cycles and multiple edges create cycles).
\end{definition}

\begin{thm}
\thlabel{Adian}
\normalfont{(\cite[Theorem 2.3]{Adian})} \textit{If a presentation has no cycles then the natural mapping from the semigroup given by the presentation to the group given by the presentation is an embedding.}
\end{thm}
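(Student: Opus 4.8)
The plan is to prove the nontrivial direction of the embedding, namely that the natural homomorphism $\iota$ from the semigroup $S$ to the group $G$ presented by $\langle X \mid R \rangle$ is injective; well-definedness and the homomorphism property are immediate, since every relation of $R$ already holds in $G$, and so is the fact that words equal in $S$ map to equal words in $G$. Writing $X^{+}$ for the set of nonempty positive words, injectivity of $\iota$ amounts to the implication that for all $p, q \in X^{+}$, if $p = q$ in $G$ then $p = q$ in $S$. This is the statement I would work towards, and the acyclicity of the left and right graphs is the hypothesis I would use to control it.

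First I would isolate the semigroup-theoretic consequences of the hypothesis, the key one being that $S$ is both left and right cancellative. The left graph records, for each relation $(u, w)$, the adjacency of the leftmost letters of $u$ and $w$; a failure of left cancellativity, traced through a minimal sequence of applications of relations, should produce a closed chain of such leftmost-letter adjacencies, that is, a cycle in the left graph. Dually, the right graph governs right cancellativity. Thus absence of cycles in the two graphs would yield left and right cancellativity of $S$. I would set this up by taking a counterexample of minimal total length, or of minimal number of elementary transformations, and performing local surgery to extract the forbidden cycle.

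With cancellativity in hand, the core of the argument is to convert a $G$-equality into an $S$-equality, and here I would use the theory of van Kampen and semigroup (Remmers) diagrams. An equality $p = q$ in $G$ is witnessed by a reduced van Kampen diagram $\Delta$ over $\langle X \mid R \rangle$ with boundary label $pq^{-1}$; since $R$ consists of relations between positive words, every cell of $\Delta$ carries a positive boundary of the form $u_i = w_i$. The aim is to show that $\Delta$ contains no interior cancellation of positive edges, so that it is in fact a semigroup diagram and hence directly encodes a derivation of $p = q$ in $S$. The point is that an interior vertex at which positive edges of adjacent cells meet and cancel corresponds exactly to a left-pair or right-pair configuration, and a nontrivial such cancellation, chased around the diagram, closes up into a cycle in the left graph or the right graph. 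Since there are none, no such cancellation can occur, and the diagram collapses to the semigroup setting.

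The main obstacle, as usual for embedding theorems of this kind, is precisely this last conversion: making rigorous the correspondence between interior cancellations in a reduced diagram and cycles in the left and right graphs, and ruling out every configuration in which a positive letter produced by one relation is consumed by another. One must handle the interaction between the purely positive world of the semigroup and the signed world of the group, and in particular verify that reducedness of $\Delta$ together with acyclicity forbids the pathological vertices. I expect that a careful induction on the number of cells of $\Delta$, combined with the cancellativity established above to perform the elementary reductions, is what makes the argument go through: cancellativity is what lets us delete matched letters without altering the two boundary words, while acyclicity is what guarantees the process terminates in a genuine semigroup diagram rather than reintroducing cancellation elsewhere.
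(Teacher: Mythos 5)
The paper contains no proof of this statement: it is imported verbatim from Adian's monograph \cite[Theorem 2.3]{Adian} and used as a black box (its only role is to establish \thref{embedding}), so there is nothing internal to compare your attempt against; it can only be judged against the known proofs of Adian's embedding theorem. Your roadmap does match those proofs in outline --- your step 1 (acyclicity of the left and right graphs implies left and right cancellativity) is Adian's cancellativity theorem, and your step 2 (a reduced van Kampen diagram with boundary label $pq^{-1}$, with $p, q$ positive, over a cycle-free presentation contains no interior cancellation and hence collapses to a semigroup derivation diagram) is essentially Remmers' diagram-theoretic proof. But as written the proposal is a plan, not a proof: both pillars are themselves substantial theorems, and in each case you defer exactly the part that carries the mathematical weight. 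For step 1, ``local surgery on a minimal counterexample'' is not a routine minimality argument; controlling how a failure of cancellation propagates through a sequence of relation applications into a closed walk in the left graph is the long combinatorial core of Adian's original treatment. For step 2, the correct intermediate notion is missing: what one actually proves is that an interior \emph{source} vertex (all incident edges oriented outward) forces a closed walk in the left graph, and an interior \emph{sink} forces one in the right graph; cycle-freeness then bans interior sources and sinks, and one still needs a separate counting (Euler-characteristic) argument --- nowhere gestured at in your sketch --- to conclude that a reduced diagram with two-sided positive boundary, no interior sources or sinks, and no directed cycles is a genuine semigroup diagram. The two ``main obstacles'' you flag are not technical residue; they \emph{are} the theorem.

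There is also a structural worry about the role you assign to cancellativity. Mal'cev showed that a cancellative semigroup need not embed in any group, so cancellativity cannot power the conversion of group equalities into semigroup equalities; in the diagram-based proof it is not used as a lemma at all, but falls out as a corollary of the embedding. Your plan to invoke cancellativity ``to perform the elementary reductions'' inside an induction on the number of cells therefore risks circularity (the reductions you want are consequences of the embedding you are trying to prove) and at best contributes nothing: the inductive step must be driven by the diagram geometry and the acyclicity hypothesis alone.
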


\begin{cor}
\thlabel{embedding}
The mapping $\phi$ given by
\[a \mapsto x, \quad b \mapsto y, \quad c \mapsto y^{-2}x^{-1}y,\]
from the semigroup $A$ given by the presentation $\langle \, a,b,c \mid ab^2c=b \, \rangle$ to the free group $\FG_2$ on the set $\{x, y\}$ is an embedding.
\end{cor}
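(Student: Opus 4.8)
The plan is to apply Adian's criterion (\thref{Adian}) to embed the semigroup $A$ into the group defined by the same presentation, and then to identify that group with $\FG_2$.

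First I would verify that the presentation $\langle\, a,b,c \mid ab^2c = b\,\rangle$ has no cycles. There is a single relation $(ab^2c,\, b)$. Its left pair is $(a,b)$, formed from the leftmost letters of the two words, and its right pair is $(c,b)$, formed from the rightmost letters. Hence the left graph consists of the single edge $\{a,b\}$ on the vertex set $\{a,b,c\}$, and the right graph consists of the single edge $\{c,b\}$. A single edge joining two distinct vertices is neither a loop nor part of a multiple edge, so neither graph contains a cycle. By \thref{Adian}, the natural map from $A$ to the group $G = \langle\, a,b,c \mid ab^2c = b\,\rangle$ is an embedding.

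Next I would show that $G$ is free of rank two. In the group, the relation $ab^2c = b$ can be rewritten as $c = b^{-2}a^{-1}b$, so the generator $c$ is redundant. Eliminating $c$ by a Tietze transformation leaves the presentation $\langle\, a,b \mid \;\rangle$, which defines the free group on $\{a,b\}$. The resulting isomorphism $G \to \FG_2$ sends $a \mapsto x$ and $b \mapsto y$, and therefore sends $c = b^{-2}a^{-1}b \mapsto y^{-2}x^{-1}y$.

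Finally I would compose the embedding $A \hookrightarrow G$ with the isomorphism $G \cong \FG_2$; tracking the three generators shows that the composite is precisely the map $\phi$, which is consequently an embedding. I do not anticipate a serious obstacle, since both component maps are already established. The only steps requiring genuine care are the bookkeeping demanded by Adian's definitions (confirming that the two one-edge graphs really are acyclic, so that \thref{Adian} applies) and checking that solving the defining relation for $c$ produces exactly $y^{-2}x^{-1}y$ under the identification $a \mapsto x$, $b \mapsto y$.
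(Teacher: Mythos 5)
Your proposal is correct and follows essentially the same route as the paper: apply Adian's criterion (\thref{Adian}) to embed $A$ into the group with the same presentation, then eliminate the third generator by a single Tietze transformation to identify that group with the free group of rank two. Your version merely spells out the details (the left pair $(a,b)$, the right pair $(c,b)$, and the computation $c = b^{-2}a^{-1}b \mapsto y^{-2}x^{-1}y$) that the paper leaves implicit.
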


\begin{proof}
By \thref{Adian}, the natural map from $A$ to the group $G$ given by the group presentation $\langle \, x,y,z \mid xy^2z=y \, \rangle$ is an embedding.
It can be easily seen that this group is free, by use of a single Tietze transformation eliminating the generator $z$.
\end{proof}

\begin{definition}
In the semigroup $A$ with presentation $\langle \, a,b,c \mid ab^2c=b \, \rangle$, the strings $abbc$ and $b$ represent the same element.
Therefore we can define a rewriting system on $A$ that replaces the string $abbc$ by $b$.
As the single rewriting rule is length reducing, the rewriting system is terminating.
Also, as $ab^2c$ does not overlap with itself, the rewriting system is locally confluent and hence confluent.
For more on rewriting systems see \cite[Section 1.1]{Book}.
Therefore, each element of $A$ is represented by a unique word in $\{a,b,c\}^+$, where this representative does not contain $abbc$ as a subword.  
Such a word is said to be in \emph{normal form}.
Note that a contiguous subword of a normal form word is also a word in normal form.
We shall therefore consider the underlying set of $A$ to be the set of all words over $\{a,b,c\}$ in normal form.
Multiplication is concatenation, except in the case where concatenation creates instances of $abbc$ as subwords, in which case the rewriting rule is applied to convert the product into normal form.
\end{definition}

\begin{lem}
\thlabel{positive}
Let $A$, $\FG_2$ and $\phi: A \to \FG_2$ be as in \thref{embedding}.
Then $\phi(w) \neq \epsilon$ for all $w \in A$.
\end{lem}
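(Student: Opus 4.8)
The plan is to exploit the injectivity of $\phi$ (\thref{embedding}) by computing $\phi(w)$ directly as a freely reduced word in $\FG_2$ and showing it can never reduce to the empty word. I would use the normal form description of $A$: write $w = g_1 g_2 \cdots g_\ell$ as a word over $\{a,b,c\}$ containing no subword $abbc$, read it from left to right, and maintain the freely reduced word $R_i$ representing $\phi(g_1 \cdots g_i)$. The goal is to prove, by induction on $i$, the invariant that each $R_i$ (for $i \geq 1$) is nonempty and its rightmost letter is \emph{positive}, i.e.\ lies in $\{x, y\}$ rather than $\{x^{-1}, y^{-1}\}$. Since $\phi(w) = R_\ell$, this invariant immediately yields $\phi(w) \neq \epsilon$.

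For the inductive step I would note that $\phi(a) = x$ and $\phi(b) = y$ are single positive letters, so appending either to a word ending positively produces no cancellation and preserves the invariant trivially. The only source of cancellation is reading $c$, since $\phi(c) = y^{-2}x^{-1}y$ begins with $y^{-1}$. Here I would run a case analysis on the length of the maximal trailing run of $y$'s in $R_i$ together with the letter preceding that run. In each case one checks that the leading $y^{-2}x^{-1}$ of $\phi(c)$ cancels only partially, leaving the final $y$ of $\phi(c)$ intact, so that $R_{i+1}$ again ends in $y$ and stays nonempty — with a single exception: when $R_i$ ends exactly in $xy^2$ (trailing $y$-run of length precisely two, immediately preceded by $x$), the block $xy^2$ annihilates the prefix $y^{-2}x^{-1}$ of $\phi(c)$ and the cancellation can cascade, in principle collapsing $R_i$ to the empty word. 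This is no accident: the identity $xy^2 \cdot (y^{-2}x^{-1}y) = y$ in $\FG_2$ is precisely the image of the defining relation $ab^2c = b$.

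The heart of the argument, and the main obstacle, is therefore to show that this catastrophic case never arises for a word in normal form. I would establish the structural fact that $R_i = \phi(g_1 \cdots g_i)$ ends in $xy^2$ with trailing $y$-run of length exactly two if and only if the prefix $g_1 \cdots g_i$ ends in the letters $abb$. The forward direction is the delicate part: reading $a$ is the only way to produce a trailing $x$ (reading $b$ or $c$ always yields a trailing $y$, using the invariant that $R_i$ never ends negatively), and a trailing $y^2$ whose penultimate contribution comes from a $c$ is preceded by $x^{-1}$ rather than $x$; hence a trailing $xy^2$ of run two forces the last three generators read to be $a, b, b$. But then reading a further $c$ would make $g_1 \cdots g_i c$ contain the subword $abbc$, contradicting the assumption that $w$ is in normal form. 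Consequently the cascade transition is never taken, the invariant is maintained throughout, and $\phi(w) \neq \epsilon$ as required.
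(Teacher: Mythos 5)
Your proof is correct, but it takes a genuinely different route from the paper's. The paper decomposes a normal-form word according to its maximal runs of $c$'s, computes the image of each block explicitly (e.g.\ $\phi(c^n)=y^{-2}(x^{-1}y^{-1})^{n-1}x^{-1}y$), and bounds the cancellation at each junction between consecutive blocks, showing that a suffix $x^{-1}y$ always survives; its case analysis is over the global shape of $w$ (no $c$, one $c$-run, several $c$-runs). You instead run a single left-to-right induction over the letters of $w$, maintaining the invariant that the reduced image of every prefix is nonempty and ends in a positive letter, and you isolate the unique dangerous configuration --- a trailing $xy^2$ meeting the prefix $y^{-2}x^{-1}$ of $\phi(c)$ --- which you exclude via the structural lemma that a trailing $xy^2$ (with $y$-run exactly two) occurs precisely when the prefix read so far ends in $abb$, whence the next letter being $c$ would create the forbidden factor $abbc$. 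The crucial use of the normal form is the same in both arguments: the paper's Case 2b is exactly the observation that three cancelling pairs would force the image of the $\{a,b\}$-block to end in $xy^2$, i.e.\ the block to end in $ab^2$, contradicting normal form. What your version buys is uniformity: one invariant and one induction replace the paper's three-way case split, and the argument reads like the correctness proof of a scanning algorithm. One point you should make explicit when writing it up is that the invariant and the structural lemma must be established \emph{jointly} by strong induction, since deciding which generator produced each trailing letter of the current reduced word (your ``forward direction'') already uses the non-catastrophic behaviour of all earlier $c$-appends (namely that every earlier $c$ left a suffix $x^{-1}y$). The paper's block-wise version buys explicit formulas and stronger suffix information at each stage, at the cost of a more elaborate case distinction.
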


\begin{proof}
Let $w$ be an element of $A$.
We proceed by a case analysis based upon the number of occurrences of contiguous strings of the letter $c$ appearing in the word $w$.

\textbf{Case 1.} The first case in when there are no occurrences of the letter $c$ in $w$.
Then $w \in \{a, b \}^+$.
As $\phi$ rewrites an occurrence of $a$ with an $x$ and an occurrence of $b$ with a $y$, we have that $\phi(w)$ is a non-empty word, as desired.
 	
\textbf{Case 2.} Now we consider the case when $w$ contains precisely one string of the letter $c$.
We split into three subcases.

\textbf{Case 2a.} Consider $w=c^n$, where $n \geq 1$.
Observe that
\[\phi(c^n)=y^{-2}(x^{-1}y^{-1})^{n-1}x^{-1}y.\] 
Hence the assertion of the lemma holds.
For future cases note that $\phi(c^n)$ ends with the suffix $x^{-1}y$.

\textbf{Case 2b.} Consider $w=uc^n$, where $u \in \{a,b\}^+$ and $n \geq 1$.
We claim that when concatenating the words $\phi(u)$ and $\phi(c^n)$, there are at most two cancelling pairs of letters.
Indeed, if there were three cancelling pairs of letters, then $\phi(u)$ would end with $xy^2$. This follows since by Case (2a) we know that $\phi(c^n)$ begins with $y^{-2}x^{-1}$. 
Hence $u$ would end with $ab^2$.
But in this case $uc^n$ contains the string $ab^2c$, contradicting that $uc^n$ is in normal form.
It follows that $(x^{-1}y^{-1})^{n-1}x^{-1}y$ is a suffix of $\phi(uc^n)$ and hence $\phi(uc^n)$ is non-empty. 
Again we note that $\phi(uc^n)$ ends with $x^{-1}y$.
For future cases we consider when $\phi(uc^n)$ can begin with a negative letter.
In such a case, the entirety of $\phi(u)$ has been cancelled by $\phi(c^n)$.
By our analysis of cancellation between $\phi(u)$ and $\phi(c)$ there are only two options: $u=b$ or $u=b^2$.

\textbf{Case 2c.} Consider $w=uc^nv$, where $u \in \{a, b\}^*$, $v \in \{a, b\}^+$ and $n \geq 1$.
Then $\phi(v)$ consists of positive letters.
By Cases (2a) and (2b), $\phi(uc^n)$ ends with a positive letter.
Then when concatenating $\phi(uc^n)$ and $\phi(v)$, there can be no pairs of cancelling letters.
Hence $\phi(uc^nv)$ is non-empty as desired.

\textbf{Case 3.} Finally we consider the case when $w$ contains more than one string of the letter $c$.
We can decompose $w=w_1w_2\dots w_{k}v$ where:
\begin{itemize}[nolistsep, noitemsep]
	\item $w_1=uc^{n_1}$, where $u \in \{a,b\}^*$ and $n_1 \geq 1$;
	\item $w_i=u_ic^{n_i}$, where $u_i \in \{a,b\}^+$ and $n_i \geq 1$ for $2 \leq i \leq k$; 
	\item $v \in \{a,b\}^*$; and $k \geq 2$; 
\end{itemize}
By our previous case analysis, for each $i$ we have that $\phi(w_i)$ is a non-empty word.
We now claim that for $1 \leq i \leq k$, when we concatenate $\phi(w_i)$ and $\phi(w_{i+1})$ there is at most one cancelling pair of letters.
We have already observed in Cases (2a) and (2b) that $\phi(w_i)$ must end with $x^{-1}y$.
Therefore for cancellation to occur, $\phi(w_{i+1})$ must begin with $y^{-1}$.
By the final observation of Case (2b), $\phi(w_{i+1})$ can only begin with a negative letter if $u_{i+1}=b$ or $u_{i+1}=b^2$.
In the first case, $\phi(w_{i+1})=y^{-1}(x^{-1}y^{-1})^{n_{i+1}-1}x^{-1}y$ and there is precisely one cancelling pair when we concatenate $\phi(w_i)$ and $\phi(w_{i+1})$.
In the second case, $\phi(w_{i+1})=(x^{-1}y^{-1})^{n_{i+1}-1}x^{-1}y$ and no cancellation occurs when we concatenate $\phi(w_i)$ and $\phi(w_{i+1})$ completing the proof of the claim.
Note that if cancellation occurs, than the cancelling pair is $yy^{-1}$.

Now consider $\phi(w_1)\phi(w_2)\dots \phi(w_k) \phi(v)$.
As already observed, $\phi(w_1)$ and $\phi(w_2)$ both end with $x^{-1}y$.
By the claim of the previous paragraph, there is at most one cancelling pair of letters, $yy^{-1}$, between $\phi(w_1)$ and $\phi(w_2)$. 
So it must be the case that $\phi(w_1w_2)$ also ends with $x^{-1}y$.
Continuing in this manner, we conclude that $\phi(w_1w_2 \dots w_k)$ ends with $x^{-1}y$.
As $\phi(v)$ is either empty or consists of positive letters, there is no cancellation between $\phi(w_1w_2 \dots w_k)$ and $\phi(v)$ and we conclude $\phi(w)$ is non-empty, completing the proof of this case and of the lemma.
\end{proof}

Before we proceed to show that $A$ is MSS but not WSS, we introduce the notion of stability.
This is needed in showing that $A$ is not WSS\@.

\begin{definition}
A semigroup $S$ is stable if both 
\[a \, \mathcal{J} \, ab \implies a \, \mathcal{R} \, ab  \quad \text{and} \quad a \, \mathcal{J} \, ba \implies a \, \mathcal{L} \, ba.\]
\end{definition}

\begin{lem}
	\thlabel{stable}
	\normalfont{\cite[Theorem A.2.4]{Rhodes}} \textit{Finite semigroups are stable.} 
\end{lem}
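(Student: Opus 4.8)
The plan is to prove the first implication, $a \, \mathcal{J} \, ab \implies a \, \mathcal{R} \, ab$, directly; the second implication $a \, \mathcal{J} \, ba \implies a \, \mathcal{L} \, ba$ then follows by the left-right dual argument. Since the inclusion $ab\,S^1 \subseteq a\,S^1$ holds in any semigroup, establishing $a \, \mathcal{R} \, ab$ reduces to proving the reverse inclusion $a\,S^1 \subseteq ab\,S^1$, and for this it is enough to produce an element $x \in S^1$ with $a = abx$.

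The first step is to unpack the hypothesis. From $a \, \mathcal{J} \, ab$ we have $S^1 a S^1 = S^1 (ab) S^1$, so in particular $a \in S^1(ab)S^1$ and there exist $p, q \in S^1$ with $a = p\,(ab)\,q$. The crucial step is to iterate this identity: rewriting it as $a = p\,a\,(bq)$, substituting it into itself, and arguing by induction on $n$, one obtains $a = p^n\, a\, (bq)^n$ for every $n \geq 1$.

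The next step is to invoke finiteness. Because $S$ is finite, every element has an idempotent power, so I would choose $n$ for which $f := (bq)^n$ is idempotent. Evaluating the iterated identity at this $n$ gives $a = p^n a f$, whence $af = p^n a f^2 = p^n a f = a$; that is, $a(bq)^n = a$. Writing $(bq)^n = b \cdot q(bq)^{n-1}$ now yields $a = ab \cdot q(bq)^{n-1}$, so $a \in ab\,S^1$, which is precisely the inclusion required. This proves $a \, \mathcal{R} \, ab$, and the mirror-image computation beginning from $a = p\,(ba)\,q$ (iterated as $a = (pb)^n a q^n$, with $(pb)^n$ chosen idempotent) gives $a \, \mathcal{L} \, ba$.

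The argument is short, and the only genuine obstacle is spotting the correct identity to iterate: one must massage the hypothesis into the form $a = p\,a\,(bq)$ and notice that repeated substitution funnels all the ambiguity into the single factor $(bq)^n$, which finiteness then collapses to an idempotent, forcing the defining equation $a = af$. Everything else is routine manipulation of the ideal inclusions defining Green's relations.
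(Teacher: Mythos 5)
Your proof is correct, and every step checks out: from $a\,\mathcal{J}\,ab$ you get $a = p(ab)q = pa(bq)$ with $p,q \in S^1$, the iteration $a = p^n a (bq)^n$ is a sound induction, finiteness supplies $n$ with $f=(bq)^n$ idempotent, and then $af = p^n a f^2 = p^n a f = a$ forces $a = a(bq)^n = ab\cdot q(bq)^{n-1} \in ab\,S^1$, which together with the trivial inclusion $ab\,S^1 \subseteq a\,S^1$ gives $a\,\mathcal{R}\,ab$; the dual computation for $\mathcal{L}$ is equally fine. The comparison here is slightly unusual: the paper offers no proof of this lemma at all, but simply cites Theorem A.2.4 of Rhodes and Steinberg, where stability is established in the more general setting of compact semigroups (by a topological argument there). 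Your argument is the standard elementary proof for the finite case, and what it buys is self-containedness: it shows that finiteness enters only through the existence of idempotent powers, so the same argument works verbatim in any semigroup in which every element has an idempotent power (e.g.\ periodic, in particular locally finite, semigroups), though it does not recover the compact case that the cited theorem covers.
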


\begin{rem}
\thlabel{rem:stable}
In a stable semigroup if $x \, \mathcal{J} \, x^2$, then we have that $x \, \mathcal{R} \, x^2$ and $x \, \mathcal{L} \, x^2$.
In other words $x \, \mathcal{H} \, x^2$.
Then $H \cap H^2 \neq \emptyset$, where $H$ is the $\mathcal{H}$-class of $x$, and we conclude that $H$ is a group.
\end{rem}

We are now ready to establish the separability properties of the semigroup $A$.

\begin{prop}
	\thlabel{monogenic counter}
	The semigroup $A$ given by the presentation $\langle \, a, b, c \mid ab^2c=b \, \rangle$ is monogenic subsemigroup separable but not weakly subsemigroup separable.
\end{prop}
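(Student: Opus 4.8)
The plan is to treat the two assertions separately, in both cases transferring questions about the semigroup $A$ to its image in $\FG_2$ under the embedding $\phi$ of \thref{embedding}, and exploiting \thref{positive}.

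For monogenic subsemigroup separability I would take $s \in A$ and a monogenic subsemigroup $\langle t \rangle \leq A$ with $s \notin \langle t \rangle$, and first argue that $\phi(s)$ lies outside the \emph{cyclic subgroup} $\langle \phi(t)\rangle$ of $\FG_2$. Indeed, if $\phi(s) = \phi(t)^k$, then $k = 0$ is impossible since it forces $\phi(s) = \epsilon$, contradicting \thref{positive}; $k < 0$ is impossible since, writing $m = -k \geq 1$, it gives $\phi(s\,t^{m}) = \epsilon$ with $s\,t^{m} \in A$, again contradicting \thref{positive}; and $k \geq 1$ would give $s = t^{k} \in \langle t\rangle$ by injectivity of $\phi$, against our assumption. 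Hence $\phi(s) \notin \langle \phi(t)\rangle$. I would then invoke the classical fact that free groups are cyclic subgroup separable to obtain a finite group $Q$ and a homomorphism $\theta \colon \FG_2 \to Q$ with $\theta(\phi(s)) \notin \langle \theta(\phi(t))\rangle$. Composing, $\theta\phi \colon A \to Q$ is a homomorphism into a finite semigroup satisfying $\theta\phi(s) \neq \theta\phi(t^{n})$ for every $n \geq 1$, so it separates $s$ from $\langle t\rangle$. This establishes that $A$ is MSS, and I expect this direction to be routine once \thref{positive} is in hand.

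For the failure of weak subsemigroup separability I would exhibit a single finitely generated subsemigroup from which a point cannot be separated, and this is where stability does the work. The element $b$ satisfies $b \mathrel{\mathcal{J}} b^2$ in $A$: the relation $b = ab^2c$ gives $b \in A^1 b^2 A^1$, while $b^2 = b\cdot b \in A^1 b A^1$. I would then set $W = \langle b^2, b^3\rangle$; since $2$ and $3$ generate every integer $\geq 2$, we have $W = \{\, b^{n} \mid n \geq 2 \,\}$, and $b \notin W$ because $\langle b\rangle \cong \mathbb{N}$ (the powers $\phi(b^{n}) = y^{n}$ are pairwise distinct). The crux is that in \emph{every} finite quotient $q \colon A \to Q$ the image $q(b)$ is a group element: the identities $q(b) = q(a)\,q(b)^2\,q(c)$ and $q(b)^2 = q(b)q(b)$ give $q(b) \mathrel{\mathcal{J}} q(b)^2$ in the finite, hence stable (\thref{stable}), semigroup $Q$, so by \thref{rem:stable} the $\mathcal{H}$-class of $q(b)$ is a group. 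Consequently $q(b)$ has finite order there, say $q(b)^{r+1} = q(b)$ with $r + 1 \geq 2$, whence $q(b) = q(b^{r+1}) \in q(W)$. Thus no finite quotient separates $b$ from $W$, so $A$ is not WSS.

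The main obstacle is the WSS direction, and specifically the recognition that the witnessing subsemigroup must be finitely generated but \emph{not} monogenic: the naive choice $\langle b^2\rangle$ fails, since it consists only of even powers of $b$ and one can separate $b$ from it in a quotient where $q(b)$ has even order, consistent with $A$ being MSS\@. Choosing $W = \langle b^2, b^3\rangle = \{\, b^{n} \mid n \geq 2 \,\}$ is exactly what lets the stability-forced identity $q(b) = q(b)^{r+1}$ land inside $q(W)$ for all finite $Q$. I would take care to verify that $b \mathrel{\mathcal{J}} b^2$ passes to quotients and that $\langle b\rangle$ is genuinely infinite, both of which follow readily from the relation and from \thref{positive} via $\phi$.
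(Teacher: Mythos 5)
Your proposal is correct and follows essentially the same route as the paper's own proof: both directions use the embedding $\phi$ of \thref{embedding} together with \thref{positive} to show $\phi(s)\notin\langle\phi(t)\rangle$ and then pull back a separating homomorphism from the free group, and both exhibit $\langle b^2,b^3\rangle$ as the witnessing subsemigroup, using $b\,\mathcal{J}\,b^2$ plus stability of finite semigroups (\thref{stable}, \thref{rem:stable}) to show $b$ cannot be separated from it. The only cosmetic difference is that you invoke cyclic subgroup separability of free groups where the paper cites Hall's theorem that free groups are weakly subgroup separable; either suffices since the subgroup in question is cyclic.
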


\begin{proof}
	Let $T=\langle u \rangle \leq A$ be a monogenic subsemigroup and let $v \in A \! \setminus \! T$.
	Let $\FG_2$ and $\phi: A \to \FG_2$ be as in \thref{embedding}. 
	As $\phi$ is an embedding, we have that $\phi(v) \notin \phi(T)$. 
	Let $H \leq \FG_2$ be the cyclic subgroup with generator $\phi(u)$. 
	First we show that $\phi(v) \notin H$. 
	
	For a contradiction, assume that $\phi(v) \in H$. 
	As $\phi(v) \notin \phi(T)$, we have that $\phi(v) \notin \{\phi(u)^n \mid n \in \mathbb{N}\}$. 
	By \thref{positive}, $\phi(v) \neq \epsilon$ and therefore $\phi(v) \neq \phi(u)^0$. 
	Therefore $\phi(v)=\phi(u)^{-n}$ for some $n \in \mathbb{N}$.
	But then $\phi(vu^n)=\phi(v)\phi(u^n)=\epsilon$.
	This contradicts \thref{positive} and we conclude $\phi(v) \notin H$.

	As $\FG_2$ is weakly subgroup separable \cite[Theorem 5.1]{Hall49}, there exists a finite group $G$ and homomorphism $\psi: \FG_2 \to G$ such that $\psi(\phi(v)) \notin \psi(H)$. 
	In particular, $\psi \circ \phi :A \to G$ is a homomorphism from $A$ to a finite semigroup such that $(\psi \circ \phi)(u) \notin (\psi \circ \phi)(T)$. 
	Hence $A$ is MSS\@. 

	To show that $A=\langle \, a, b, c \mid ab^2c=b \, \rangle$ is not weakly subsemigroup separable, consider the subsemigroup $V=\langle b^2, b^3 \rangle$. 
	As $\langle b \rangle \cong \mathbb{N}$, we have $b \in A \setminus V$. 
	Let $P$ be a finite semigroup and let $\sigma: A \to P$ be a homomorphism. 
	The relation $ab^2c=b$ ensures that $b \mathcal{J} b^2$ and hence $\sigma(b) \mathcal{J} \sigma(b^2)$. 
	But as $P$ is finite we have that the $\mathcal{H}$-class of $\sigma(b)$ is a group by \thref{stable} and \thref{rem:stable}.
	Hence $\sigma(V)=\langle \sigma(b) \rangle$ is a finite cyclic group and in particular $\sigma(b) \in \sigma(V)$. 
	Therefore $A$ is not weakly subsemigroup separable. 
\end{proof}

The semigroup $A$ will prove crucial in characterising which finite semigroups are MSS-preserving.
In fact our characterisation will stretch to residually finite periodic semigroups.
Note that a periodic semigroup is MSS if and only if it is residually finite. 
Periodic residually finite semigroups are MSS because every monogenic subsemigroup of such a semigroup is finite.
The fact that MSS semigroups are residually finite is a consequence of \cite[Proposition 2.1]{me}.
Although the statement of \cite[Proposition 2.1]{me} asserts that WSS semigroups are residually finite, the proof of this assertion only uses weak subsemigroup separability to separate elements from monogenic subsemigroups, and thus also serves as a proof that MSS semigroups are residually finite.

\begin{thm}
\thlabel{monogenic finite}
	A residually finite periodic semigroup is monogenic sub\-semigroup separability preserving if and only if it is a union of groups.	
\end{thm}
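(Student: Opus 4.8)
The plan is to prove the two implications separately: the reverse implication directly, and the forward one by contrapositive. Throughout I write $U=\langle(s,p)\rangle$ for a typical monogenic subsemigroup of $S\times P$ and $(x,y)\in(S\times P)\setminus U$ for the point to be separated, and I note that $\pi_S(U)=\langle s\rangle$ and $\pi_P(U)=\langle p\rangle$. For the reverse implication, suppose $P$ is a residually finite periodic union of groups and let $S$ be MSS. I would first dispatch the easy cases: if $x\notin\langle s\rangle$ then $\pi_S$ together with the monogenic subsemigroup separability of $S$ separates $(x,y)$ from $U$; if $y\notin\langle p\rangle$ then, since $\langle p\rangle$ is finite ($P$ is periodic), residual finiteness of $P$ separates $y$ from the finite set $\langle p\rangle$; and if $\langle s\rangle$ is finite then $U$ is finite and residual finiteness of $S\times P$ (from \cite[Proposition 2.1]{me} and \cite[Theorem 2]{Gray2009}) finishes. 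This leaves the essential case $x=s^i$, $y=p^j$ with $\langle s\rangle\cong\mathbb{N}$. Here the hypothesis that $P$ is a union of groups is used decisively: $p$ lies in a subgroup, so $\langle p\rangle$ is a finite cyclic group of order $d$ and $p^k=p^j$ holds exactly when $k\equiv j\pmod d$. Since $(s^i,p^j)\notin U$ and $\langle s\rangle\cong\mathbb{N}$, one checks $i\not\equiv j\pmod d$.

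The heart of the argument is then a separation performed inside $S$: the element $s^i$ can be separated from the arithmetic progression $Y=\{\,s^k: k\equiv j\pmod d,\ k\ge 1\,\}$, even though $Y$ is in general not a subsemigroup. The trick is to reduce this to separation from a genuine monogenic subsemigroup by a shift. Let $j_0\in\{1,\dots,d\}$ with $j_0\equiv j\pmod d$ and put $h=d-j_0\in\{0,\dots,d-1\}$; then $i+h\equiv i-j\not\equiv 0\pmod d$, so $s^{i+h}\notin\langle s^d\rangle$. Using that $S$ is MSS, choose a finite index congruence $\rho$ separating $s^{i+h}$ from $\langle s^d\rangle$ (when $h=0$ this already separates $s^i$ from $Y=\langle s^d\rangle$). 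If $\rho$ failed to separate $s^i$ from $Y$, then $s^i\mathrel\rho s^k$ for some $k\equiv j\pmod d$; multiplying on the right by $s^h$ and using compatibility gives $s^{i+h}\mathrel\rho s^{k+h}$, where $k+h\equiv 0\pmod d$ and $k+h\ge d$, so $s^{k+h}\in\langle s^d\rangle$, contradicting the choice of $\rho$. Hence $\rho$ yields a finite semigroup $F$ and a homomorphism $\phi\colon S\to F$ with $\phi(s^i)\notin\phi(Y)$. Finally, by residual finiteness of $P$ choose $\psi\colon P\to Q$ with $Q$ finite and $\psi$ injective on $\langle p\rangle$; then $\phi\times\psi\colon S\times P\to F\times Q$ separates $(s^i,p^j)$ from $U$, since for $k\equiv j\pmod d$ the first coordinates differ, while for $k\not\equiv j\pmod d$ the second coordinates differ. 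This completes the reverse direction.

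For the forward implication I would argue the contrapositive: if $P$ is not a union of groups, exhibit an MSS semigroup $S$ with $S\times P$ not MSS. Choose $p\in P$ lying in no subgroup; since $P$ is periodic and stable (\thref{stable}, \thref{rem:stable}), $\langle p\rangle$ is a finite monogenic semigroup of index $m\ge 2$, so $p$ sits in a genuine tail and $p\neq p^k$ for all $k\ge 2$. For $S$ I would take the semigroup $A=\langle\, a,b,c\mid ab^2c=b\,\rangle$ of \thref{monogenic counter}, which is MSS with $\langle b\rangle\cong\mathbb{N}$ and whose defining relation forces $b\,\mathcal J\,b^2$; consequently, in every finite quotient the image of $b$ lies in a group (\thref{stable}, \thref{rem:stable}), so the image of $\langle b\rangle$ is cyclic with no tail. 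The plan is to build a monogenic subsemigroup $U$ of $A\times P$ from $b$ and a suitable power of $p$, and then to select a target point whose $A$-coordinate is forced to be grouplike in every finite quotient while its $P$-coordinate must coincide with a genuine tail element of $\langle p\rangle$. The clash between the unavoidable grouping in the first coordinate and the persistent tail in the second is exactly what prevents separation, mirroring both the non-separation mechanism of \thref{monogenic counter} and the decomposable-element obstruction of \thref{finite strong}.

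The main obstacle lies in this forward direction, and it is the monogenic constraint. Unlike \thref{finite strong}, where the witnessing subsemigroup may be generated by many elements and one can combine two distinct generators, here $U$ must be generated by a single element, so every member of $U$ is a power of one generator and that manoeuvre is unavailable. The inseparable point must therefore be produced so that it is caught by $U$ in every finite quotient, including non-product ones, purely through the interaction of $b\,\mathcal J\,b^2$ with the tail of $\langle p\rangle$, and verifying this against all finite congruences is the delicate step; I expect this to require a careful pigeonhole-and-rewrite argument using the relation $ab^2c=b$. By contrast, in the reverse direction the only genuine subtlety is separating a point from an arithmetic progression with merely monogenic separability, which the shift-and-multiply reduction above resolves cleanly.
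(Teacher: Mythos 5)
Your reverse direction is complete and correct, and it is in essence the paper's own argument: the easy cases are dispatched identically, and your shift-by-$s^h$ reduction for separating $s^i$ from the arithmetic progression $Y$ is the same manoeuvre the paper performs (there phrased as multiplying the pair $(s^i,t^j)$ by $(s,t)^k$ to reach the idempotent case, then arguing by contradiction); your version, carried out entirely inside $S$, is if anything slightly cleaner.

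The forward direction, however, has a genuine gap: you describe the intended mechanism but never execute it, and the execution is the entire content of this implication. You do not specify the generator of $U$, the target point, or --- crucially --- the verification that \emph{every} finite index congruence on the product fails to separate that point from $U$; you explicitly defer this as ``the delicate step'' requiring ``a careful pigeonhole-and-rewrite argument.'' The missing step is in fact not a rewriting argument but a stability computation in the finite quotient, and it hinges on a device absent from your sketch: an idempotent power of $t$. With $m$, $r$ the index and period of $\langle t\rangle$ and $i=m-1$, the paper takes $U=\langle(b,t)\rangle\leq A^1\times T$ (note $A^1$ rather than $A$: the adjoined identity is needed to write $(b^i,t^{i+r})=(b,t^d)(b^{i-1},t^{i+r})$ when $i=1$), targets $(b^i,t^{i+r})\notin U$, and chooses $d\equiv 0\pmod{r}$ with $t^d$ idempotent. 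Idempotency of $t^d$ is what makes the relation $ab^2c=b$ usable in the product: $(a,t^d)(b^2,t^d)(c,t^d)=(b,t^d)$, so in any finite quotient $[(b,t^d)]_\rho\,\mathcal{J}\,[(b,t^d)]_\rho^2$, whence by \thref{stable} and \thref{rem:stable} the element $[(b,t^d)]_\rho$ lies in a finite group and $[(b,t^d)]_\rho=[(b^{1+(k-1)n},t^d)]_\rho$ for some $k>2$ and all $n$. Multiplying by $[(b^{i-1},t^{i+r})]_\rho$ and using $t^dt^{i+r}=t^{i+r}$ gives $[(b^i,t^{i+r})]_\rho=[(b^{i+(k-1)r},t^{i+r})]_\rho$, and $(b^{i+(k-1)r},t^{i+r})\in U$, so $\rho$ cannot separate. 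Note that this also dissolves the ``monogenic constraint'' you flagged as the main obstacle: the auxiliary elements $(a,t^d)$, $(b,t^d)$, $(c,t^d)$ are not required to lie in $U$ at all --- they merely act in the ambient quotient to force the Green's relation --- so no second generator is ever needed. Without this (or an equivalent) computation, your forward implication remains a plan rather than a proof.
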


\begin{proof}
$(\Leftarrow)$ Let $T$ be a residually finite periodic semigroup which is a union of groups.
Let $S$ be a MSS semigroup and let $U=\langle (s, t) \rangle \leq S \times T$. 
Let $(x,y) \in (S \times T) \! \setminus \! U$. 
We separate into cases.
Note some cases may overlap.

\textbf{Case 1.} Suppose $\langle s \rangle \leq S$ is finite.
As $T$ is periodic, we also have that $\langle t \rangle$ is finite.
Therefore $U$ is also finite.
As $S$ is MSS it is also residually finite (\cite[Proposition 2.1]{me}) and therefore $S \times T$ is residually finite.
Hence we can separate $(x, y)$ from $U$.

\textbf{Case 2.} Suppose that $x \notin \pi_S(U)$; that is, $x \notin \langle s \rangle \leq S$. 
We can separate $(x, y)$ from $U$ by factoring through $S$ and invoking the monogenic subsemigroup separability of $S$. 

\textbf{Case 3.} Suppose that $y \notin \pi_T(U)$. 
As $\pi_T(U)=\langle t \rangle$ and $T$ is periodic, we have that $\pi_T(U)$ is finite.
So we can separate $(x, y)$ from $U$ by factoring though $T$ and invoking the residual finiteness of $T$.

\textbf{Case 4.} Now suppose that $\langle s \rangle \cong \mathbb{N}$, $x \in \pi_S(U)$ and $y \in \pi_T(U)$. 
Let $r \in \mathbb{N}$ be minimal such that $t^{r+1}=t$.
Such an $r$ exists as $T$ is periodic and a union of groups.
As $x \in \pi_S(U)$, we have that $x=s^i$ for some $i \in \mathbb{N}$.
As $y \in \pi_T(U)$, we have that $y=t^j$ for some $j \in \{1, 2, \dots, r\}$.
Observe that
\[\pi_S(U \cap (S \times \{t^j\}))=\{s^{j+rn} \mid n \geq 0\}.\]
As $(x, y) \notin U$, it must be the case that $i \not\equiv j \pmod{r}$.
We split into two subcases.

\textbf{Case 4a.} First we will deal with the case $j=r$. 
In this case, $y$ is an idempotent and we have 
\[\pi_S(U \cap(S \times \{t^r\}))=\langle s^r \rangle.\]
Hence $x=s^i \notin \langle s^r \rangle$. 
As $S$ is MSS, there exists a finite semigroup $P_1$ and homomorphism $\phi_1 : S \to P_1$ such that $\phi(x) \notin \phi(\langle s^r \rangle)$. 
As $T$ is residually finite, there exists a finite semigroup $P_2$ and homomorphism $\phi_2 : T \to P_2$ such that $\phi_2(y) \neq \phi(\pi_T(U) \setminus \{y\})$.
Then $\phi: S \times T \to P_1  \times P_2$ given by $(a, b) \mapsto (\phi_1(a), \phi_2(b))$ is a homomorphism that separates $(x, y)$ from $U$.

\textbf{Case 4b.} Now assume that $j \in \{1, 2, \dots, r-1\}$.
Let $k$ be such that $j+k=r$.
Then as $i \not \equiv j \pmod{r}$, we have that  $ i+k \not \equiv j+k \pmod{r}$.
Hence $(s^{i+k}, t^{j+k})=(s^{i+k}, t^r) \notin U$.
By Case 4a we can separate $(s^{i+k}, t^r)$ from $U$.

We now show that we can separate $(x, y)=(s^i, t^j)$ from $U$.
For a contradiction suppose it cannot be separated.
Let $\sim$ be a finite index congruence which separates $(s^{i+k}, t^r)$ from $U$.
As $(s^i, t^j)$ cannot be separated from $U$, there exists $\ell \in \mathbb{N}$ such that $(s^i, t^j)\sim (s, t)^{\ell}$.
But then
\[(s^{i+k}, t^r)=(s^i, t^j)(s, t)^k \sim (s, t) ^{\ell} (s, t)^k \in U.\]
This contradicts that $\sim$ separates $(s^{i+k}, t^r)$ from $U$.
Hence $(x, y)$ can be separated from $U$, completing the proof of this case and establishing that $S \times T$ is MSS\@.

$(\Rightarrow)$ Now suppose that $T$ is a residually finite periodic semigroup which is not a union of groups.
We will show that $T$ is not MSS-preserving.
As $T$ is not a union of groups, there exists an element $t \in T$ such that for all $n \geq 2$ we have that $t^n \neq t$.
Let $m$ be the index and let $r$ be the period of the monogenic subsemigroup $\langle t \rangle$.
We have that $m \geq 2$ and $t^m=t^{m+r}$.
Let $i=m-1$.
	
	As before, let $A$ be the monogenic subsemigroup separable semigroup given by the presentation $\langle \, a,b,c \mid ab^2c=b \, \rangle$. 
	It is easy to show that a semigroup $S$ is MSS if and only if $S^1$ is MSS.
	Therefore $A^1$ is also MSS.
	Let $U=\langle (b, t) \rangle \leq A^1 \times T$. 
	Observe that 
	\begin{equation}
	\label{10}
	\pi_{A^1}(U \cap (A^1 \times \{t^{i+r}\})=\{b^{i+nr} \mid n \geq 1\}.
	\end{equation}
	Then $(b^i, t^{i+r}) \notin U$. 
	
	Let $d \in \{m, m+1, \dots, m+r-1\}$ be such that $d \equiv 0 \pmod{r}$.
	Then it must be the case that $t^d$ is an idempotent.
	Furthermore, $t^{i+r}t^{d}=t^{i+r}$.
	Let $\rho$ be a finite index congruence on $A^1 \times T$. 
	Then
	\[[(a, t^d)]_\rho[(b^2, t^d)]_\rho[(c, t^d)]_\rho=[(ab^2c,t^d)]_\rho=[(b, t^d)]_\rho.\]
	As also $[(b, t^d)]_\rho[(b, t^d)]_\rho=[(b^2, t^d)]_\rho$ we have that $[(b,t^d)]_\rho \mathcal{J} [(b,t^d)]_\rho^2$. 
	Since $(A^1 \times T)/\rho$ is finite, we conclude that the $\mathcal{H}$-class of $[(b, t^d)]_{\rho}$ is a finite group by \thref{stable} and \thref{rem:stable}.
	In particular there exists $k >2$ such that $[(b, t^d)]_\rho^k=[(b, t^d)]_\rho$.
	From this we conclude that $[(b, t^d)]_\rho=[(b^{1+(k-1)n}, t^d)]_\rho$ for all $n \in \mathbb{N}$.
	Adopting the convention that if $i=1$ then $b^{i-1}$ is the identity of $A^1$, we observe that 
	\begin{align*}
	[(b^i, t^{i+r})]_{\rho}&=[(b, t^d)]_\rho[(b^{i-1}, t^{i+r})]_{\rho}\\&=[(b^{1+(k-1)r}, t^d)]_\rho[(b^{i-1}, t^{i+r})]_{\rho}\\&=[(b^{i+(k-1)r}, t^{i+r})]_\rho. \end{align*}
	From Equation (\ref{10}), we have that $(b^{i+(k-1)r}, t^{i+r}) \in U$.
	So $\rho$ does not separate $(b^i, t^{i+r})$ from $U$.
	As $\rho$ was arbitrary we conclude that $A^1 \times T$ is not MSS and in particular $T$ is not MSS-preserving.
\end{proof}

\begin{rem}
From \thref{finite strong} and \thref{monogenic finite}, we have that the class of finite MSS-preserving semigroups is contained within the class of finite SSS-preserving semigroups. 
However, as already observed, there is no such containment between the class of finite WSS-preserving semigroups with either finite SSS-preserving or finite MSS-preserving semigroups.
\end{rem}

The following example is a consequence of the proof of \thref{monogenic finite}.
It shows that the direct product of two MSS semigroups need not be MSS, even when one of the factors is finite.

\begin{eg}
\thlabel{monogenic eg}
The direct product of $A=\langle \, a, b, c \mid ab^2c=b \, \rangle$ and the two element zero semigroup $N=\{x, 0\}$ (with the zero element 0) is not monogenic subsemigroup separable.
This follows from the proof of \thref{monogenic finite} and as $N$ is not a union of groups.
\end{eg} 

Although it is not true that the direct product of an MSS semigroup with a residually finite semigroup is necessarily MSS, if we strengthen the assumption of monogenic subsemigroup separability to weak subsemigroup separability, we obtain a positive result, which we present below.
\thref{final} can be seen as a successor of \thref{monogenic finite}, be it can also be viewed a variation of \thref{2WSS}.
The similarities between \thref{final} and \thref{2WSS} are discussed in \thref{rem}.

\begin{prop}
\thlabel{final}
	The direct product of a weakly subsemigroup separable semigroup $S$ with a residually finite periodic semigroup $T$ is monogenic subsemigroup separable. 
\end{prop}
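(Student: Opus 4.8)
The plan is to mimic the proof of \thref{2WSS}, exploiting the fact that periodicity of $T$ forces $\pi_T(U) = \langle t \rangle$ to be finite; this will let me replace every appeal to weak subsemigroup separability of the second factor (used in \thref{2WSS}) by mere residual finiteness of $T$. First I record the standing facts: $S$ is residually finite by \cite[Proposition 2.1]{me}, $T$ is residually finite by hypothesis, and hence $S \times T$ is residually finite by \cite[Theorem 2]{Gray2009}. Given a monogenic subsemigroup $U = \langle (s,t) \rangle \leq S \times T$ and a point $(x,y) \in (S \times T) \setminus U$, I would first dispose of the two easy cases. If $x \notin \pi_S(U) = \langle s \rangle$, I separate $(x,y)$ from $U$ by factoring through $\pi_S$ and using (weak, hence monogenic) subsemigroup separability of $S$. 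If $y \notin \pi_T(U) = \langle t \rangle$, then since $T$ is periodic the set $\langle t \rangle$ is finite, so I separate by factoring through $\pi_T$ and invoking residual finiteness of $T$.

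The substantive case is $x = s^i \in \langle s \rangle$ and $y = t^j \in \langle t \rangle$. If $\langle s \rangle$ is finite then, as $\langle t \rangle$ is also finite, $U$ is finite, and residual finiteness of $S \times T$ finishes. So I assume $\langle s \rangle \cong \mathbb{N}$, whence $i$ is the unique exponent with $x = s^i$. I then split $U = \{(s^k, t^k) \mid k \geq 1\}$ into the finite \emph{head} $\{(s^k, t^k) \mid 1 \leq k \leq i\}$ and the \emph{tail} $\{(s^k, t^k) \mid k \geq i+1\}$; since $(x,y) \notin U$ it lies in neither. For the head, residual finiteness of $S \times T$ supplies a finite quotient separating $(x,y)$ from this finite set. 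For the tail, the key observation is that $\{s^k \mid k \geq i+1\} = \langle s^{i+1}, \dots, s^{2i+1} \rangle$ is a finitely generated subsemigroup of $S$ not containing $s^i$ (because $\langle s \rangle \cong \mathbb{N}$), so weak subsemigroup separability of $S$ yields $\phi_1 \colon S \to P_1$ with $\phi_1(s^i) \notin \phi_1(\{s^k \mid k \geq i+1\})$; then $\phi_1 \circ \pi_S$ separates $(x,y)$ from the entire tail purely through the first coordinate. Taking the product of the head and tail homomorphisms separates $(x,y)$ from $U$, completing the argument.

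I expect this to be essentially a streamlining of \thref{2WSS} rather than to present a serious obstacle; the one point meriting care is recognizing precisely why periodicity of $T$ is what makes the proof go through with $T$ only residually finite rather than WSS. Periodicity guarantees $\langle t \rangle$ is finite in the $y \notin \pi_T(U)$ case, and it forces $U$ to be finite whenever $\langle s \rangle$ is finite, so the only place genuine separability power is consumed is the tail, which lives entirely in the first coordinate. It is also worth double-checking that this tail step truly requires \emph{weak}, not merely monogenic, subsemigroup separability of $S$, since $\{s^k \mid k \geq i+1\}$ is finitely generated but not monogenic; this confirms that the hypothesis on $S$ cannot be relaxed to MSS, consistent with \thref{monogenic eg}.
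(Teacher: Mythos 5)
Your proof is correct and takes essentially the same route as the paper's: the same case analysis (finite $\langle s\rangle$ handled by residual finiteness of $S\times T$; $x\notin\pi_S(U)$ handled by factoring through $S$), and in the main case the same head/tail decomposition of $U$, separating $(x,y)$ from the finite head $\{(s^k,t^k)\mid k\le i\}$ via residual finiteness of $S\times T$ and from the tail $\{(s^k,t^k)\mid k\ge i+1\}$ via weak subsemigroup separability of $S$ applied purely through the first coordinate. The only differences are cosmetic: your extra case $y\notin\pi_T(U)$ is subsumed by the paper's case analysis (its main case never uses $y$), and your generating set $\langle s^{i+1},\dots,s^{2i+1}\rangle$ for the tail is the correct one (the paper's proof writes $s^{2i-1}$, an off-by-two slip).
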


\begin{proof}
Let $U =\langle (s, t) \rangle \leq S \times T$ and let $(x, y) \in (S \times T) \setminus U$. 
We split into cases.

\textbf{Case 1.} Suppose $\langle s \rangle \leq S$ is finite.
As $T$ is periodic, we also have that $\langle t \rangle$ is finite.
Therefore $U$ is also finite.
As $S$ is MSS it is also residually finite (\cite[Proposition 2.1]{me}) and therefore $S \times T$ is residually finite.
Hence we can separate $(x, y)$ from $U$.

\textbf{Case 2.} Suppose that $x \notin \pi_S(U)$. 
Then $x \notin \langle s \rangle \leq S$. 
So we can separate $(x, y)$ from $U$ by factoring through $S$ and invoking the weak subsemigroup separability of $S$. 

\textbf{Case 3.} Now suppose that $\langle s \rangle \cong \mathbb{N}$ and that $x \in \pi_S(U)$.
Then $x=s^i$ for some $i \in \mathbb{N}$.
As $S$ is WSS, we can separate $s^i$ from $\langle s^{i+1}, s^{i+2}, \dots, s^{2i-1} \rangle =\{s^k \mid k \geq i+1\}$.
Hence we can separate $(x, y)$ from $\{(s^k, t^k) \mid k \geq i+1\}$ by factoring through $S$ and using the weak subsemigroup separability of $S$.
That is, there exists a finite semigroup $P_1$ and a homomorphism $\phi_1 : S \times T \to P_1$ such that $\phi_1(x,y) \neq \phi_1(s^n, t^n)$ for all $n \geq i+1$. 
As $S$ is WSS, it is residually finite by \cite[Proposition 2.1]{me}.
Hence $S \times T$ is residually finite by \cite[Theorem 2]{Gray2009}.
Then we can separate $(x, y)$ from the finite set $\{(s^k, t^k) \mid k \leq i\}$.
That is, there exists a finite semigroup $P_2$ and homomorphism $\phi_2: S \times T \to P_2$ such that $\phi_2(x, y) \neq \phi_2(s^k, t^k)$ for $1 \leq k \leq i$.
Then the homomorphism $\phi: S \times T \to P_1 \times P_2$ given by $\phi(a, b)=(\phi_1(a, b), \phi_2(a, b))$ separates $(x, y)$ from $U$. 
\end{proof}

\begin{rem}
\thlabel{rem}
If every residually finite periodic semigroup is WSS, then \thref{final} becomes a specific instance of \thref{2WSS}.
However, it is not known whether every residually finite periodic semigroup is WSS\@. 
This is left as an open question at the end of this section.
\end{rem}

We conclude this section, and the paper, with some open problems.

\begin{op}
Is there a characterisation of when the direct product of two monogenic subsemigroup separable semigroups is itself monogenic subsemigroup separable?
\end{op}

\begin{op}
If the direct product of two semigroups is monogenic subsemigroup separable, is at least one of the factors monogenic subsemigroup separable?
\end{op}

\begin{op}
Is every residually finite periodic semigroup weakly subsemigroup separable?
\end{op}

\end{document}